\def\NZQ{\mathbb}               
\def\ZZ{{\NZQ Z}}
\def\RR{{\NZQ R}}
\def\eb{{\bold e}}
\def\xb{{\bold x}}
\def\eb{{\bold e}}
\def\opn#1#2{\def#1{\operatorname{#2}}} 
\opn\aff{aff} \opn\con{conv} \opn\relint{relint} \opn\st{st}
\opn\lk{lk} \opn\cn{cn} \opn\core{core} \opn\vol{vol}
\opn\link{link} \opn\ini{in} \opn\int{int} \opn\aff{aff}
\def\Hc{{\mathcal H}}
\def\Ec{{\mathcal E}}
\def\Pc{{\mathcal P}}
\def\Ec{{\mathcal E}}
\def\Qc{{\mathcal Q}}
\newtheorem{Theorem}{Theorem}[section]
\newtheorem{Lemma}[Theorem]{Lemma}
\newtheorem{Corollary}[Theorem]{Corollary}
\newtheorem{Proposition}[Theorem]{Proposition}
\newtheorem{Question}[Theorem]{Question}
\theoremstyle{definition}
\newtheorem{Remark}[Theorem]{Remark}
\newtheorem{Example}[Theorem]{Example}
\newtheorem{Examples}[Theorem]{Examples}
\newtheorem{Problem}[Theorem]{Problem}
\let\epsilon\varepsilon
\let\phi=\varphi
\let\kappa=\varkappa
\numberwithin{equation}{section}
\begin{document}
\title{Minkowski sum of polytopes and its normality}
\author{Akihiro Higashitani}
\thanks{
{\bf 2010 Mathematics Subject Classification:}
Primary 52B20; Secondary 14M25, 13F99. \\
\, \, \, {\bf Keywords:}
Minkowski sum, normal, integer decomposition property, edge polytope. \\
\, \, \, The author is partially supported by JSPS Research Fellowship for Young Scientists.}

\address{Akihiro Higashitani,
Department of Mathematics, Graduate School of Science, 
Kyoto University, Kitashirakawa-Oiwake cho, Sakyo-ku, Kyoto, 606-8502, Japan}
\email{ahigashi@math.kyoto-u.ac.jp}

\maketitle

\begin{abstract}
In this paper, we consider the normality or the integer decomposition property (IDP, for short) 
for Minkowski sums of integral convex polytopes. 
We discuss some properties on the toric rings associated with Minkowski sums of integral convex polytopes. 
We also study Minkowski sums of edge polytopes and 
give a sufficient condition for Minkowski sums of edge polytopes to have IDP. 
\end{abstract}

\section{Introduction}

Normality and integer decomposition property 
are quite important properties on not only integral convex polytopes 
but also polytopal affine semigroup rings and toric varieties (consult, e.g., \cite{BrGu, BGT, CLS}). 

First of all, let us recall some definitions related to integral convex polytopes. 
Let $\Pc \subset \RR^N$ be an {\em integral} convex polytope, 
which is a convex polytope all of whose vertices have integer coordinates. 
Let $L(\Pc) \subset \ZZ^N$ be the affine lattice generated by the elements of $\Pc \cap \ZZ^N$, i.e., 
$L(\Pc)=\{v_0+\sum_{v \in \Pc \cap \ZZ^N} z_v (v-v_0) : z_v \in \ZZ \}$, where $v_0$ is some vertex of $\Pc$. 
\begin{itemize}
\item We say that $\Pc$ is {\em normal} if for any integer $k = 1, 2, \ldots$ and $\alpha \in k \Pc \cap L(k\Pc)$, 
where $k\Pc=\{k \alpha : \alpha \in \Pc\}$, 
there exist $\alpha_1, \ldots, \alpha_k$ belonging to $\Pc \cap \ZZ^N$ 
such that $\alpha = \alpha_1 + \cdots + \alpha_k$. 
\item We say that $\Pc$ has the {\em integer decomposition property} (IDP, for short) 
if for any integer $k = 1, 2, \ldots$ and $\alpha \in k \Pc \cap \ZZ^N$, 
there exist $\alpha_1, \ldots, \alpha_k$ belonging to $\Pc \cap \ZZ^N$ 
such that $\alpha = \alpha_1 + \cdots + \alpha_k$. Thus, if $\Pc$ has IDP, then $\Pc$ is normal. 
What $\Pc$ has IDP is also called what $\Pc$ is {\em integrally closed}. 
It is well-known that $\Pc$ always has IDP when $\dim \Pc \leq 2$. 
\item For some subsets $A_1,\ldots,A_m$ of $\RR^N$, 
let $A_1+\cdots+A_m=\{\sum_{i=1}^ma_i : a_i \in A_i, 1 \leq i \leq m\}$. 
This set $A_1+\cdots+A_m$ is called the {\em Minkowski sum} of $A_1,\ldots,A_m$. 
Note that when $\Pc \subset \RR^N$ is a convex set, 
the Minkowski sum $\underbrace{\Pc+\cdots+\Pc}_m$ of $m$ copies of $\Pc$ coincides with 
the dilation $m\Pc$, where $m\Pc=\{m \alpha : \alpha \in \Pc\}$. 
Hence, taking Minkowski sum of convex polytopes can be regarded 
as a generalization of a dilation of a convex polytope. 
\end{itemize}

Let $K$ be a field and $R$ the $K$-algebra $K[\xb^{\pm},t]=K[x_1,x_1^{-1}, \ldots, x_N,x_N^{-1},t]$. 
For $\alpha=(\alpha_1,\ldots,\alpha_N) \in \ZZ^N$, let $\xb^\alpha$ denote 
the Laurent monomial $x_1^{\alpha_1} \cdots x_N^{\alpha_N} \in R$. 
Given an integral convex polytope $\Pc \subset \RR^N$, we define two $K$-algebras $K[\Pc]$ and $\Ec_K(\Pc)$ as follows. 
\begin{enumerate}
\item Let $K[\Pc] \subset R$ be the $K$-algebra generated by $\{\xb^\alpha t : \alpha \in \Pc \cap \ZZ^N\}$, 
that is, $$K[\Pc]=K[\xb^\alpha t : \alpha \in \Pc \cap \ZZ^N].$$ 
\item Let $\Ec_K(\Pc) \subset R$ be the $K$-algebra defined by 
\begin{align*}
\Ec_K(\Pc)=K[\xb^\alpha t^n : \alpha \in n \Pc \cap \ZZ^N, n \in \ZZ_{\geq 0}]. 
\end{align*}
\end{enumerate}
These algebras are finitely generated graded $K$-algebras, 
where their grading is defined by $\deg(\xb^\alpha t^n)=n$ for $\alpha \in n \Pc \cap \ZZ^N$. 
We call $K[\Pc]$ the {\em toric ring} of $\Pc$ and $\Ec_K(\Pc)$ the {\em Ehrhart ring} of $\Pc$. 
Notice that $\Pc$ is normal if and only if so is $K[\Pc]$, and 
$\Pc$ has IDP if and only if $K[\Pc]=\Ec_K(\Pc)$.


\bigskip

In the outstanding paper \cite{BGT}, Bruns, Gubeladze and Trung proved the following: 
\begin{Theorem}[{\cite[Theorem 1.3.3]{BGT}}]\label{motivation}
Let $\Pc$ be an integral convex polytope $\Pc$ of dimension $d$. Then the following hold: 
\begin{itemize}
\item[(a)] $K[n\Pc]$ is normal if $n \geq d-1$; 
\item[(b)] $K[n\Pc]$ is Koszul if $n \geq d$; 
\item[(c)] $K[n\Pc]$ is level of a-invariant $-1$ if $n \geq d+1$. 
\end{itemize}
\end{Theorem}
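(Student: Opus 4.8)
The plan is to prove the three statements in increasing order of depth, with part (a) carrying the essential combinatorics and parts (b), (c) built on top of it. For part (a), I would actually prove the stronger assertion that $n\Pc$ has IDP (is integrally closed) for $n \geq d-1$; since IDP implies normality, this gives normality of $K[n\Pc]$ as stated. First I would reduce the IDP of $Q:=n\Pc$ to a local statement about lattice simplices: fix a triangulation $\Pc=\bigcup_j \Delta_j$ into lattice $d$-simplices $\Delta_j=\con(v_0^{(j)},\dots,v_d^{(j)})$ whose vertices are lattice points of $\Pc$ (e.g.\ a pulling triangulation), so that $m\Pc=\bigcup_j m\Delta_j$ for every $m$. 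To establish IDP it then suffices to show that every $z\in (kn)\Pc\cap\ZZ^N$ is a sum of $k$ lattice points of $n\Pc$.

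The engine is the fractional-part peeling inside a simplex. Given $z\in(kn)\Pc\cap\ZZ^N$, it lies in $(kn)\Delta$ for some $\Delta=\con(v_0,\dots,v_d)$; write $z=\sum_{i=0}^d\lambda_i v_i$ with $\lambda_i\geq 0$ and $\sum_i\lambda_i=kn$. Setting $w=\sum_i\lfloor\lambda_i\rfloor v_i$ and $r=z-w=\sum_i\{\lambda_i\}v_i$, the point $r$ is a lattice point lying in $e\Delta$, where $e:=\sum_i\{\lambda_i\}$ is an integer with $0\leq e\leq d$ (there are $d+1$ fractional parts, each $<1$). Thus $z=w+r$, where $w$ is a sum of $kn-e$ lattice points of $\Pc$ and $r\in e\Pc\cap\ZZ^N$ with $e\leq d$. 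Whenever $e\leq n$ I can regroup: combine $r$ with $n-e$ of the vertices forming $w$ into a single block lying in $e\Pc+(n-e)\Pc=n\Pc$, and split the remaining $(k-1)n$ vertices into $k-1$ blocks of $n$ vertices, each in $n\Pc$. Since $e\leq d$, the condition $e\leq n$ is automatic once $n\geq d$, which already proves IDP for $n\geq d$. The hard part — and the main obstacle of the whole theorem — is the sharp improvement to $n\geq d-1$, i.e.\ the single borderline case $e=d$ with $n=d-1$. Here the remainder $r$ can be a strictly interior lattice point of the $d$-fold dilate $d\Delta$, which cannot be peeled by subtracting a vertex (the corresponding barycentric coordinate would go negative), and an \emph{empty} simplex offers no interior lattice point of $\Delta$ to use. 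Resolving this requires the Ewald--Wessels type refinement: one must leave the single simplex and exploit the global lattice structure of $\Pc$, peeling one further lattice point using that such a critical $r$ also meets lower-dimensional faces, where an induction on $\dim\Pc$ applies. This is the delicate step I expect to absorb most of the effort.

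For part (b) I would upgrade normality to a quadratic structure on the toric ideal. Writing $K[n\Pc]$ as the quotient of a polynomial ring $K[y_\alpha:\alpha\in n\Pc\cap\ZZ^N]$ modulo its toric ideal $I_{n\Pc}$, it suffices, since a toric ring defined by a quadratic Gr\"obner basis is Koszul, to exhibit such a Gr\"obner basis for $n\geq d$ with respect to a suitable term order. The combinatorial input is a strengthening of the covering in (a): for $n\geq d$ every lattice point of $2n\Pc$ admits enough representations as a sum of two points of $n\Pc$ that the binomial relations among the generators reduce to quadrics, giving a ``sorting'' normal form. Producing this quadratic generation and verifying the Gr\"obner property is the technical content here, and it rests directly on the degree bound from (a).

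For part (c) I would use the canonical module. By (a), $n\Pc$ is normal, so $K[n\Pc]=\Ec_K(n\Pc)$ is a normal Cohen--Macaulay graded domain, and by the Danilov--Stanley description its canonical module is spanned in degree $m$ by the monomials $\xb^\alpha t^m$ with $\alpha\in\relint(m\cdot n\Pc)\cap\ZZ^N$. The $a$-invariant is then $-\min\{m:\relint(m\cdot n\Pc)\cap\ZZ^N\neq\emptyset\}$, and levelness means this module is generated in its lowest degree. For $n\geq d+1$ I would show that $\relint(n\Pc)$ already contains a lattice point, forcing the minimal degree to be $1$ and hence the $a$-invariant to be $-1$, and that every interior lattice point of $m\cdot n\Pc$ splits as an interior lattice point of $n\Pc$ plus a point of $(m-1)n\Pc$ — an ``interior'' analogue of the peeling in (a). The extra $+1$ in the dilation is exactly what guarantees the existence and propagation of interior lattice points, and proving this interior decomposition is the principal obstacle for (c), paralleling the borderline difficulty already met in part (a).
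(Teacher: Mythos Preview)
This theorem is quoted from \cite{BGT} without proof; the paper does not supply its own argument for it, so there is no proof in the paper to compare your proposal against directly.

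The paper does, however, prove a Minkowski-sum generalization (Theorem~\ref{M_IDP}) of parts (a) and (c), and its method --- Carath\'eodory's theorem followed by peeling off vertices whose coefficients exceed $1$ (Lemma~\ref{kagi}) --- is exactly the fractional-part peeling you describe. Note that the paper's version only reaches $n\geq d$ for IDP in part (a), not $n\geq d-1$; you correctly flag the borderline case $n=d-1$ as requiring an Ewald--Wessels type refinement, and the paper does not attempt that sharpening in its Minkowski setting. Your outline of (c) via the Danilov--Stanley description of the canonical module and interior-point peeling parallels the paper's Lemma~\ref{kagi}(b) and the proof of Theorem~\ref{M_IDP}(b). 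Part (b) on Koszulness is not treated in the paper at all (the Minkowski analogue is posed as an open question at the end of Section~\ref{generalization}); your route via a quadratic Gr\"obner basis is the standard one taken in \cite{BGT}, but as written the ``sorting normal form'' step is a placeholder rather than an argument --- the existence of such a basis is the entire content and does not follow from IDP alone.
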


This theorem conserns the ``dilation'' of polytope. 
Hence it is natural to think of whether we can extend those results to the ``Minkowski sum of polytopes''. 
In this paper, we extend Theorem \ref{motivation} (a) and (c) to the Minkowski sum of polytopes with a slight modification. 
More precisely, we prove that for integral convex polytopes $\Pc_1,\ldots,\Pc_m \subset \RR^d$ 
of dimension $d_1,\ldots,d_m$, respectively, $K[n_1\Pc_1+\cdots+n_m\Pc_m]$ is normal 
if $n_i \geq d_i$ for each $1 \leq i \leq m$ and is level of a-invariant $-1$ 
if $n_i \geq d_i+1$ for each $1 \leq i \leq m$ (Theorem \ref{M_IDP}).

\bigskip

We are also, in particular, interested in the following: 
\begin{Problem}\label{toi}
For integral convex polytopes $\Pc_1,\ldots,\Pc_m \subset \RR^N$, 
when is $\Pc_1+\cdots + \Pc_m$ normal? Or, when does $\Pc_1+\cdots + \Pc_m$ have IDP?
\end{Problem}
Of course, Theorem \ref{M_IDP} (a) gives one solution. 
However, except for this, little result is known on normality or IDP for Minkowski sums of integral convex polytopes. 
Moreover, as is shown by the following example, 
Minkowski sums of integral convex polytopes having IDP is not necessarily normal. 
\begin{Example}[{See \cite[p.2315]{workshop}}] 
Let $\Pc_1=\con(\{(0,0,0),(1,0,0),(0,1,0)\}) \subset \RR^3$ and $\Pc_2=\con(\{(0,0,0),(1,1,3)\}) \subset \RR^3$. 
Since each of $\dim \Pc_1$ and $\dim \Pc_2$ is at most 2, each of $\Pc_1$ and $\Pc_2$ has IDP. 
However, we see that $\Pc_1+\Pc_2$ is not normal. In particular, this does not have IDP. 
\end{Example}

On the other hand, it is obvious from the definition that 
for an integral convex polytope $\Pc$ having IDP, $n\Pc$ always has IDP for every $n \in \ZZ_{>0}$. 
In addition, when we dilate an integral convex polytope of dimension $d$ 
by at least $(d-1)$ times, the dilated polytope always has IDP (see \cite[\S 2.2]{CLS}). 
In this paper, for the development of the study of IDP for Minkowski sums of integral convex polytopes, 
we investigate the Minkowski sum of integral convex polytopes arising from graphs, 
called {\em edge polytopes} (see Section 3). 
We give a sufficient condition for Minkowski sums of edge polytopes to have IDP (Theorem \ref{subgraph2}).

A brief organization of this paper is as follows. 
First, in Section 2, we prove an extended version of Theorem \ref{motivation} (a) and (c) 
(Theorem \ref{M_IDP}, which is the main result of this paper). 
Next, in Section 3, we study Minkowski sums of edge polytopes. 
After we recall some notions and definitions on graphs, we define the edge polytope  
and discuss the dimesnion of Minkowski sum of edge polytopes (Proposition \ref{jigen}) in Section 3.1. 
We also give a sufficient condition for Minkowski sums of edge polytopes to have IDP (Theorem \ref{subgraph2}) 
in Section 3.2. Finally, we give some examples concerning Theorem \ref{subgraph2} in Section 3.3. 

\bigskip

\section{Toric rings of Minkowski sums of polytopes}\label{generalization}

In this section, we extend Theorem \ref{motivation} (a) and (c) 
from ``dilations'' of integral convex polytopes to ``Minkowski sum''. 

Before it, we discuss the dimension of Minkowski sums of polytopes. 
\begin{Proposition}\label{jigena}
Let $\Pc \subset \RR^N$ and $\Pc' \subset \RR^N$ be convex polytopes. 
For any $\ell>0$, we have $\dim(\Pc+\Pc')=\dim(\ell \Pc + \Pc')$. 
\end{Proposition}
\begin{proof}
Let $d=\dim(\Pc+\Pc')$. Then there exist $(d+1)$ affinely independent 
vectors $v_0+v_0', v_1+v_1', \ldots, v_d+v_d'$ in $\Pc+\Pc'$, 
where $v_i \in \Pc$ and $v_i' \in \Pc'$ for each $0 \leq i \leq d$. 
Then $\ell v_0+v_0', v_1+(\ell-1) v_0+v_1', \ldots, v_d+(\ell-1)v_0+v_d'$ are affinely independent 
and each of them belongs to $\ell \Pc + \Pc'$. 
Hence, we obtain $\dim (\Pc + \Pc') \leq \dim (\ell \Pc + \Pc').$ 
On the other hand, let $\Qc=\ell \Pc$ and $\ell'=1/\ell$. By the above discussion, we also obtain that 
$\dim(\ell \Pc + \Pc')= \dim (\Qc + \Pc') \leq \dim ( \ell' \Qc + \Pc')=\dim (\Pc + \Pc').$
\end{proof}

As a corollary of this proposition, we also see: 
\begin{Corollary}
Let $\Pc_1,\ldots,\Pc_m \subset \RR^N$ be convex polytopes and let $n_i>0$ for each $1 \leq i \leq m$. 
Then $$\dim(\Pc_1+\cdots+\Pc_m)=\dim(n_1\Pc_1+\cdots+n_m\Pc_m).$$
\end{Corollary}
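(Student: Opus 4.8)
The plan is to deduce the Corollary directly from Proposition \ref{jigena} by rescaling the summands one at a time. The key observations are that the Minkowski sum is commutative and associative and that a Minkowski sum of finitely many convex polytopes is again a convex polytope; this lets me isolate any single summand $\Pc_k$ and regard the sum of all the remaining summands as the second polytope $\Pc'$ appearing in Proposition \ref{jigena}.

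Concretely, I would set, for $0 \leq k \leq m$,
\[
\Qc_k = n_1\Pc_1 + \cdots + n_k\Pc_k + \Pc_{k+1} + \cdots + \Pc_m,
\]
so that $\Qc_0 = \Pc_1 + \cdots + \Pc_m$ and $\Qc_m = n_1\Pc_1 + \cdots + n_m\Pc_m$ are exactly the two polytopes whose dimensions are to be compared. For each $k$ with $1 \leq k \leq m$, I would apply Proposition \ref{jigena} with $\Pc = \Pc_k$, with $\ell = n_k > 0$, and with $\Pc'$ equal to the sum of the other summands, namely $n_1\Pc_1 + \cdots + n_{k-1}\Pc_{k-1} + \Pc_{k+1} + \cdots + \Pc_m$. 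Since $\Qc_{k-1} = \Pc_k + \Pc'$ and $\Qc_k = n_k\Pc_k + \Pc'$, this yields $\dim \Qc_{k-1} = \dim \Qc_k$.

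Chaining these equalities over $k = 1, \ldots, m$ gives $\dim \Qc_0 = \dim \Qc_1 = \cdots = \dim \Qc_m$, which is precisely the assertion. I expect no genuine obstacle here, since Proposition \ref{jigena} already carries all the content; the only points needing a routine check are that the remaining summands indeed assemble into a single convex polytope $\Pc'$ (so that the hypotheses of Proposition \ref{jigena} are met, using the convention that the empty Minkowski sum is the point $\{0\}$ in the degenerate cases) and that the commutativity and associativity of the Minkowski sum justify the regrouping $\Qc_{k-1} = \Pc_k + \Pc'$ and $\Qc_k = n_k\Pc_k + \Pc'$ at each step.
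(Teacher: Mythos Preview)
Your proposal is correct and follows essentially the same idea as the paper: both arguments reduce to repeated applications of Proposition~\ref{jigena}, scaling one summand at a time while treating the remaining summands as a single polytope $\Pc'$. Your chained presentation $\dim\Qc_0=\dim\Qc_1=\cdots=\dim\Qc_m$ is in fact a bit more transparent than the paper's induction on $m$, which first groups $\Pc_1+\Pc_2$ to invoke the inductive hypothesis and then applies Proposition~\ref{jigena} once more with $\ell=n_1/n_2$; but the underlying mechanism is the same.
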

\begin{proof}
We prove the assertion by induction on $m$. This is obvious when $m=1$. 
When $m=2$, by Proposition \ref{jigena}, we have 
$\dim (\Pc_1 + \Pc_2) = \dim( n_1 \Pc_1+ \Pc_2)=\dim(n_1\Pc+n_2\Pc_2)$. 
When $m > 2$, by Proposition \ref{jigen} together with the inductive hypothesis, we obtain 
\begin{align*}
\dim(\Pc_1+\cdots+\Pc_m)&=\dim(n_2(\Pc_1+\Pc_2)+n_3\Pc_3+\cdots+n_m\Pc_m) \\
&=\dim\left(\frac{n_1}{n_2} n_2\Pc_1+(n_2\Pc_2+\cdots+n_m\Pc_m)\right) \\
&= \dim(n_1\Pc_1+\cdots+n_m\Pc_m). 
\end{align*}
\end{proof}


\bigskip

Now we prove the main theorem of this paper. 
\begin{Theorem}\label{M_IDP}
Let $\Pc_1,\ldots,\Pc_m \subset \RR^N$ be integral convex polytopes 
and let $d_i=\dim \Pc_i$ for $1 \leq i \leq m$. Given positive integers $n_1,\ldots,n_m$, 
the following hold: 
\begin{itemize}
\item[(a)]
$n_1\Pc_1+\cdots+n_m\Pc_m$ has IDP (in particular, $K[n_1\Pc_1+\cdots+n_m\Pc_m]$ is normal) 
if $n_i \geq d_i$ for each $1 \leq i \leq m$; 
\item[(b)]
$K[n_1\Pc_1+\cdots+n_m\Pc_m]$ is level of a-invariant $-1$ if $n_i \geq d_i+1$ for each $1 \leq i \leq m$. 
\end{itemize}
\end{Theorem}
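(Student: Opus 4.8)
The plan is to establish (a) by a Carath\'eodory-plus-rounding argument and to deduce (b) from the Danilov--Stanley description of the canonical module of the (necessarily normal, hence Cohen--Macaulay) ring $K[P]$, where $P := n_1\Pc_1 + \cdots + n_m\Pc_m$. For (a), fix $k \geq 1$ and $\alpha \in kP \cap \ZZ^N = (kn_1\Pc_1 + \cdots + kn_m\Pc_m)\cap\ZZ^N$; the goal is to split $\alpha$ into $k$ lattice points of $P$. First write $\alpha = \gamma_1 + \cdots + \gamma_m$ with $\gamma_i \in kn_i\Pc_i$. Since $\Pc_i = \con(\Pc_i\cap\ZZ^N)$, Carath\'eodory's theorem applied in $\aff\Pc_i$ gives $\gamma_i = \sum_{l=0}^{d_i}\lambda^{(i)}_l u^{(i)}_l$ with $u^{(i)}_l\in\Pc_i\cap\ZZ^N$, $\lambda^{(i)}_l \geq 0$ and $\sum_l \lambda^{(i)}_l = kn_i$. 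Set $s_i := \sum_l(\lambda^{(i)}_l - \lfloor\lambda^{(i)}_l\rfloor)$; this is an integer with $0 \leq s_i \leq d_i$, being a sum of $d_i+1$ numbers in $[0,1)$, and here is the only place the hypothesis is used: $s_i \leq d_i \leq n_i$. Rounding down gives $\alpha = \sum_{i,l}\lfloor\lambda^{(i)}_l\rfloor u^{(i)}_l + R$, where $R := \alpha - \sum_{i,l}\lfloor\lambda^{(i)}_l\rfloor u^{(i)}_l$ is a lattice point lying in $s_1\Pc_1 + \cdots + s_m\Pc_m$.

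Now I would distribute. From each $\Pc_i$ the integral part supplies $\sum_l\lfloor\lambda^{(i)}_l\rfloor = kn_i - s_i$ lattice points (with multiplicity); group them into $k-1$ blocks of size $n_i$ together with one block of size $n_i - s_i \geq 0$, which is possible since $(k-1)n_i + (n_i - s_i) = kn_i - s_i$. For $1 \leq j \leq k-1$ let $\beta_j$ be the sum over $i$ of the $n_i$ chosen points from $\Pc_i$, so $\beta_j \in n_1\Pc_1 + \cdots + n_m\Pc_m = P$, and let $\beta_k := R + \sum_i(\text{sum of the }n_i - s_i\text{ remaining points from }\Pc_i)$; its $\Pc_i$-contribution lies in $s_i\Pc_i + (n_i - s_i)\Pc_i = n_i\Pc_i$, so $\beta_k \in P$ as well. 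Each $\beta_j$ is a lattice point and $\sum_j\beta_j = \alpha$, proving IDP; normality of $K[P]$ follows.

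For (b), I would first record that, by (a), $K[P]$ is normal and standard graded, hence Cohen--Macaulay, and that by Danilov--Stanley (see \cite{BrGu}) its canonical module is $\omega = \bigoplus_{n\geq 1}\bigoplus_{\delta\in\relint(nP)\cap\ZZ^N} K\,\xb^\delta t^n$. Thus ``a-invariant $-1$ and level'' reduces to two combinatorial assertions: (b1) $\relint(P)\cap\ZZ^N \neq\emptyset$, and (b2) for every $n\geq 2$, $\relint(nP)\cap\ZZ^N = (\relint(P)\cap\ZZ^N) + ((n-1)P\cap\ZZ^N)$, where the degree-$(n-1)$ piece of $K[P]$ is identified with $(n-1)P\cap\ZZ^N$ via IDP. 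For (b1), in each $\Pc_i$ I choose affinely independent lattice points $v^{(i)}_0,\ldots,v^{(i)}_{d_i}$ and set $q_i := \sum_{l=0}^{d_i} v^{(i)}_l + (n_i - d_i - 1)v^{(i)}_0$; since $n_i \geq d_i+1$, all barycentric coordinates of $q_i/n_i$ are positive, so $q_i \in \relint(n_i\Pc_i)\cap\ZZ^N$, and $\sum_i q_i \in \relint(P)\cap\ZZ^N$ because the relative interior of a Minkowski sum is the Minkowski sum of the relative interiors.

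The content of the theorem is the inclusion ``$\subseteq$'' in (b2), and I expect this to be the main obstacle. The strategy is a rounding argument that keeps everything interior: given $\delta\in\relint(nP)\cap\ZZ^N$, write $\delta = \sum_i\gamma_i$ with $\gamma_i\in\relint(nn_i\Pc_i)$ and express each $\gamma_i$ through a full-dimensional lattice simplex $\con(v^{(i)}_0,\ldots,v^{(i)}_{d_i})\subseteq\Pc_i$ containing $\gamma_i/(nn_i)$, say $\gamma_i = \sum_l\lambda^{(i)}_l v^{(i)}_l$ with $\sum_l\lambda^{(i)}_l = nn_i$. I then seek integers $c^{(i)}_l$ with $c^{(i)}_l\geq 1$ for all $l$, $\sum_l c^{(i)}_l = n_i$ (feasible since $n_i\geq d_i+1$), and $c^{(i)}_l\leq\lambda^{(i)}_l$; the point $\beta_i := \sum_l c^{(i)}_l v^{(i)}_l$ then lies in $\relint(n_i\Pc_i)$, the sum $\beta := \sum_i\beta_i$ is an interior lattice point of $P$, and $\delta - \beta = \sum_{i,l}(\lambda^{(i)}_l - c^{(i)}_l)v^{(i)}_l\in(n-1)P\cap\ZZ^N$. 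The difficulty is that the constraints $1\leq c^{(i)}_l\leq\lambda^{(i)}_l$ force every barycentric weight $\lambda^{(i)}_l$ to be at least $1$, which can fail when $\gamma_i$ lies close to the boundary of $nn_i\Pc_i$. Overcoming this is the crux: I must exploit the slack $n\geq 2$ and $n_i\geq d_i+1$ (so that $\sum_l\lambda^{(i)}_l = nn_i \geq 2(d_i+1)$), together with the freedom in the decomposition $\delta = \sum_i\gamma_i$ and in the choice of simplices, to shift mass onto a full affinely independent vertex set each carrying weight at least $1$. Once such a representation is secured, the rounding and the identity $s\Pc_i + t\Pc_i = (s+t)\Pc_i$ close the argument exactly as in part (a).
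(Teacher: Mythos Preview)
Your argument for (a) is correct and is essentially the paper's approach: Carath\'eodory on each summand, round down, observe the residual lies in $s_1\Pc_1+\cdots+s_m\Pc_m$ with $s_i\le d_i\le n_i$, then redistribute. The paper packages the same idea as an iterative ``peel off one vertex per summand'' lemma (its Lemma~\ref{kagi}(a)), but the content is identical.

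For (b) your reduction to (b1) and (b2) via Danilov--Stanley is right, and (b1) is fine (a full-dimensional lattice simplex in $\Pc_i$ has its relative interior inside $\relint\Pc_i$, so your $q_i$ works). The genuine gap is exactly where you flag it: in (b2) you need $c^{(i)}_l\ge 1$ for \emph{every} $l$, hence $\lambda^{(i)}_l\ge 1$ for every $l$, and no amount of ``shifting mass'' between the $\gamma_i$'s or re-choosing simplices will force this in general---an interior point can sit arbitrarily close to a facet of its Carath\'eodory simplex. As written, the proposal does not close this.

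The paper avoids the obstacle by \emph{not} trying to extract a full interior lattice point of $P$ in one shot. Instead it peels off one vertex from each summand at a time: writing $\gamma_i=\sum_j r^{(i)}_j v^{(i)}_j$ with all $r^{(i)}_j>0$ and $\sum_j r^{(i)}_j=nn_i\ge d_i+2$, pigeonhole gives a single index $k_i$ with $r^{(i)}_{k_i}>1$; subtracting $v^{(i)}_{k_i}$ keeps all coefficients strictly positive, so the remainder stays in $\relint((nn_i-1)\Pc_i)$. Iterating down to level $(d_1+1,\ldots,d_m+1)$ yields $\alpha'\in\int\big((d_1{+}1)\Pc_1+\cdots+(d_m{+}1)\Pc_m\big)\cap\ZZ^N$ plus a supply of lattice points in the $\Pc_i$'s; one then sets $\alpha''=\alpha'+\sum_i\sum_{j=1}^{n_i-d_i-1}\beta^{(i)}_j$, which lies in $\relint(P)\cap\ZZ^N$ because $\relint(A)+B=\relint(A+B)$ for convex $A,B$. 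The leftover lattice points group into $(n-1)$ elements of $P\cap\ZZ^N$. This one-at-a-time extraction is the missing idea in your (b2); once you adopt it, your outline goes through without needing all $\lambda^{(i)}_l\ge 1$.
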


For the proof of this theorem, we prove Lemma \ref{naibu} and Lemma \ref{kagi}. 

For $A \subset \RR^N$, let $\aff(A)$ be the affine subspace of $\RR^N$ spanned by $A$. 
We denote $\int(A)$ by the relative interior of $A$ with respect to $\aff(A)$. 
\begin{Lemma}\label{naibu}
Let $\Pc_1,\ldots,\Pc_m \subset \RR^N$ be convex polytopes. Then one has 
$$\int(\Pc_1+\cdots+\Pc_m)=\int(\Pc_1)+\cdots+\int(\Pc_m).$$ 
\end{Lemma}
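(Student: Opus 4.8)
The plan is to establish the two-factor case $\int(\Pc+\Pc')=\int(\Pc)+\int(\Pc')$ and then deduce the general statement by induction on $m$, writing $\Pc_1+\cdots+\Pc_m=(\Pc_1+\cdots+\Pc_{m-1})+\Pc_m$. Before starting I would record the identity $\aff(\Pc+\Pc')=\aff(\Pc)+\aff(\Pc')$ (every $p+q$ with $p\in\aff(\Pc)$, $q\in\aff(\Pc')$ is an affine combination of the points $v_i+w_j$, and conversely $\Pc+\Pc'$ sits inside the affine set $\aff(\Pc)+\aff(\Pc')$), which ensures that the relative interiors on the two sides are taken inside compatible affine subspaces. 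The single tool I would use throughout is the standard characterization of the relative interior of a convex set: a point $z$ of a convex set $C$ lies in $\int(C)$ if and only if for every $c\in C$ there is $\epsilon>0$ with $z+\epsilon(z-c)\in C$. For a polytope $C=\con(\{u_1,\ldots,u_r\})$ this yields at once that any strictly positive convex combination $z=\sum_i\lambda_i u_i$ (all $\lambda_i>0$) lies in $\int(C)$: writing an arbitrary $c=\sum_i\mu_i u_i\in C$, the point $z+\epsilon(z-c)$ has coordinates $\lambda_i+\epsilon(\lambda_i-\mu_i)$, which are nonnegative and sum to $1$ for small $\epsilon>0$.

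For the inclusion $\int(\Pc)+\int(\Pc')\subseteq\int(\Pc+\Pc')$ I would let $v_1,\ldots,v_s$ and $w_1,\ldots,w_t$ be the vertices of $\Pc$ and $\Pc'$, so that $\Pc+\Pc'=\con(\{v_i+w_j\})$. If $x=\sum_i\lambda_i v_i\in\int(\Pc)$ and $y=\sum_j\mu_j w_j\in\int(\Pc')$ with all $\lambda_i,\mu_j>0$, then expanding the product gives $x+y=\sum_{i,j}\lambda_i\mu_j(v_i+w_j)$, a convex combination of the generators $v_i+w_j$ with all coefficients $\lambda_i\mu_j>0$; by the criterion above, $x+y\in\int(\Pc+\Pc')$.

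The reverse inclusion is the main obstacle, because an interior point $z\in\int(\Pc+\Pc')$ a priori decomposes only as $z=x+y$ with $x\in\Pc$ and $y\in\Pc'$, and nothing forces $x,y$ into the respective relative interiors. To repair this I would fix relative interior points $x_0\in\int(\Pc)$ and $y_0\in\int(\Pc')$ (say the barycenters of the vertices), so that $x_0+y_0\in\int(\Pc+\Pc')$ by the inclusion just proved. Applying the characterization to $z\in\int(\Pc+\Pc')$ with $c=x_0+y_0$ produces $\delta>0$ with $z'=z+\delta(z-(x_0+y_0))\in\Pc+\Pc'$; decomposing $z'=x'+y'$ (with $x'\in\Pc$, $y'\in\Pc'$) and rearranging gives $z=x+y$, where $x=(x'+\delta x_0)/(1+\delta)$ and $y=(y'+\delta y_0)/(1+\delta)$. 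Since $x$ is a convex combination of $x'\in\Pc$ and $x_0\in\int(\Pc)$ with strictly positive weight on $x_0$, the line-segment principle — itself an immediate consequence of the positive-combination criterion — gives $x\in\int(\Pc)$, and symmetrically $y\in\int(\Pc')$; thus $z\in\int(\Pc)+\int(\Pc')$.

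I expect the only genuine difficulty to be this last decomposition step: the key idea is the ``pull-back'' of $z$ slightly away from the known interior point $x_0+y_0$, which converts an arbitrary decomposition into one that is interior in each factor. The remaining care is bookkeeping — checking that all auxiliary points stay inside $\aff(\Pc+\Pc')=\aff(\Pc)+\aff(\Pc')$ so that the relative interiors are meaningful — after which the induction on $m$ is entirely routine.
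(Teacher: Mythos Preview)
Your proof is correct and takes a route that differs from the paper's in both inclusions. For $\int(\Pc)+\int(\Pc')\subseteq\int(\Pc+\Pc')$ the paper argues topologically: if $U\subset\Pc$ and $V\subset\Pc'$ are relatively open neighborhoods of $x$ and $y$, then $U+V$ is relatively open in $\aff(\Pc)+\aff(\Pc')=\aff(\Pc+\Pc')$; you instead multiply out strictly positive convex combinations of vertices. (A small omission on your side: you assume without comment that every $x\in\int(\Pc)$ admits a representation $\sum_i\lambda_i v_i$ with all $\lambda_i>0$; this is true and follows from exactly the overshoot-and-average trick you use later, pushing $x$ past the barycenter and pulling back.) For the reverse inclusion the paper simply writes $z=rx+(1-r)y$ with $x,y\in\Pc+\Pc'$ and $0<r<1$, decomposes $x=x_1+x_2$, $y=y_1+y_2$, and asserts $rx_1+(1-r)y_1\in\int(\Pc)$, which as stated does not follow without a more careful choice of $x,y$. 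Your argument makes this precise: fix $x_0+y_0\in\int(\Pc)+\int(\Pc')$, overshoot $z$ past it to reach $z'=x'+y'\in\Pc+\Pc'$, then average back so that each summand becomes a convex combination of a boundary point and an interior point with positive weight on the latter, hence interior by the line-segment principle. The upshot is that the paper's topological forward step is marginally quicker, while your reverse step is the one that is actually rigorous.
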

\begin{proof}
It suffices to show the case $m=2$. Namely, our goal is to prove that 
for convex polytopes $\Pc$ and $\Qc$ in $\RR^N$, we have $\int(\Pc)+\int(\Qc)=\int(\Pc+\Qc)$. 

Take $x \in \int(\Pc)$ and $y \in \int(\Qc)$. 
Then there is an open subset $U \subset \Pc$ (resp. $V \subset \Qc$) 
with respect to $\aff(\Pc)$ (resp. $\aff(\Qc)$) such that $x \in U$ (resp. $y \in V$). 
Then $x+y \in U+V \subset \Pc + \Qc$. Moreover, $U+V$ is an open set with respect to 
$\aff(\Pc)+\aff(\Qc)=\aff(\Pc+\Qc)$. Hence, $x+y \in \int(\Pc+\Qc)$. 


On the other hand, let $z \in \int(\Pc+\Qc)$. 
Then there are $x$ and $y$ in $\Pc+\Qc$ such that $z=rx + (1-r)y$ for some $0 < r < 1$. 
Moreover, there are $x_1 \in \Pc$ and $x_2 \in \Qc$ (resp. $y_1 \in \Pc$ and $y_2 \in \Qc$) 
such that $x=x_1+x_2$ (resp. $y=y_1+y_2$). Hence, $z=(rx_1+(1-r)y_1)+(rx_2+(1-r)y_2)$. 
Since $rx_1+(1-r)y_1 \in \int(\Pc)$ and $rx_2+(1-r)y_2 \in \int(\Qc)$, we conclude that $z \in \int(\Pc)+\int(\Qc)$. 
\end{proof}

\begin{Lemma}\label{kagi}
Work with the same notation as in Theorem \ref{M_IDP}. 
\begin{itemize}
\item[(a)]
If $n_i \geq d_i+1$ for each $i$, then we have 
\begin{equation}\label{tousiki}
\begin{aligned}
&(n_1\Pc_1+\cdots+n_m\Pc_m) \cap \ZZ^N = \\
&\quad\quad\quad(d_1\Pc_1+\cdots+d_m\Pc_m ) \cap \ZZ^N+
\sum_{i=1}^m (\underbrace{\Pc_i \cap \ZZ^N + \cdots + \Pc_i \cap \ZZ^N}_{n_i-d_i}). 
\end{aligned}
\end{equation}
\item[(b)]If $n_i \geq d_i+2$ for each $i$, then we have 
\begin{equation}\label{tousiki2}
\begin{aligned}
&\int(n_1\Pc_1+\cdots+n_m\Pc_m) \cap \ZZ^N = \\
&\int((d_1+1)\Pc_1+\cdots+(d_m+1)\Pc_m ) \cap \ZZ^N+
\sum_{i=1}^m (\underbrace{\Pc_i \cap \ZZ^N + \cdots + \Pc_i \cap \ZZ^N}_{n_i-d_i-1}). 
\end{aligned}
\end{equation}
\end{itemize}
\end{Lemma}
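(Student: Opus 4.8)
The plan is to prove both identities by the same strategy. The inclusion $\supseteq$ is immediate, and the inclusion $\subseteq$ is obtained by repeatedly ``peeling off'' a single lattice point from one summand, lowering its dilation factor by one, until $\Pc_i$ is dilated exactly $d_i$ times in (a), resp. $d_i+1$ times in (b). For $\supseteq$ in (a), every element of the right-hand side is a sum of lattice points lying in $n_1\Pc_1+\cdots+n_m\Pc_m$, hence a lattice point of it; for $\supseteq$ in (b) one uses in addition the elementary fact that $\int(A)+B\subseteq\int(A+B)$ for polytopes $A,B$ (a ball around an interior point of $A$ translates to a ball inside $A+B$), together with $\int(n\Pc)=n\int(\Pc)$.

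The heart is the following one-step peeling claim, which I would isolate as a sublemma. Fix $i=1$ and write $\Qc=n_2\Pc_2+\cdots+n_m\Pc_m$. In case (a): if $k\ge d_1+1$ and $\alpha\in(k\Pc_1+\Qc)\cap\ZZ^N$, then there is $v\in\Pc_1\cap\ZZ^N$ with $\alpha-v\in((k-1)\Pc_1+\Qc)\cap\ZZ^N$. In case (b): if $k\ge d_1+2$ and $\alpha\in\int(k\Pc_1+\Qc)\cap\ZZ^N$, then there is $v\in\Pc_1\cap\ZZ^N$ with $\alpha-v\in\int((k-1)\Pc_1+\Qc)\cap\ZZ^N$. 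Granting this, part (a) follows by peeling $n_1-d_1$ times from $\Pc_1$ (the factor stays $\ge d_1+1$ throughout), then $n_2-d_2$ times from $\Pc_2$, and so on; part (b) follows the same way, using Lemma \ref{naibu} to rewrite $\int(k\Pc_1+\Qc)=k\int(\Pc_1)+\int(\Qc)$ at each stage and peeling $n_i-d_i-1$ times from each $\Pc_i$ (the factor stays $\ge d_i+2$). The leftover lattice point lands in $(d_1\Pc_1+\cdots+d_m\Pc_m)\cap\ZZ^N$, resp. in $\int((d_1+1)\Pc_1+\cdots+(d_m+1)\Pc_m)\cap\ZZ^N$.

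To prove the sublemma, write $\alpha=x+w$ with $x\in k\Pc_1$ (resp. $x\in\int(k\Pc_1)=k\int(\Pc_1)$, via Lemma \ref{naibu}) and $w\in\Qc$, and set $p=x/k\in\Pc_1$ (resp. $p\in\int(\Pc_1)$). By Carath\'eodory's theorem applied in $\aff(\Pc_1)$, write $p=\sum_j\lambda_j u_j$ with the $u_j$ affinely independent vertices of $\Pc_1$ (hence lattice points), $\lambda_j>0$ and $\sum_j\lambda_j=1$; since there are at most $d_1+1$ of them, some coefficient satisfies $\lambda_{j_0}\ge 1/(d_1+1)$. Put $v=u_{j_0}$, which is a lattice point of $\Pc_1$, so $\alpha-v$ is a lattice point, and
\[
\frac{x-v}{k-1}=\sum_{j\ne j_0}\frac{k\lambda_j}{k-1}\,u_j+\frac{k\lambda_{j_0}-1}{k-1}\,u_{j_0},
\]
a combination with coefficients summing to $1$. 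In case (a) the last coefficient is $\ge 0$ because $k\lambda_{j_0}\ge k/(d_1+1)\ge 1$, so $x-v\in(k-1)\Pc_1$ and $\alpha-v\in(k-1)\Pc_1+\Qc$. In case (b), since $k\ge d_1+2$ that coefficient is strictly positive ($k\lambda_{j_0}-1\ge k/(d_1+1)-1\ge 1/(d_1+1)>0$), so $(x-v)/(k-1)$ has exactly the same support $\{u_j\}$ as $p$ and therefore lies in $\int(\tau)$, where $\tau=\con(\{u_j\})$. Because $p\in\int(\tau)\cap\int(\Pc_1)$ and $\int(\tau)\subseteq\int(\Pc_1)$ (each point of $\int(\tau)$ lies strictly between $p$ and another point of $\tau\subseteq\Pc_1$, hence is interior), we get $(x-v)/(k-1)\in\int(\Pc_1)$, i.e. $x-v\in\int((k-1)\Pc_1)$; then $\alpha-v=(x-v)+w\in\int((k-1)\Pc_1)+\Qc\subseteq\int((k-1)\Pc_1+\Qc)$.

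I expect the interior case (b) to be the main obstacle: a priori, subtracting a vertex from an interior point can push it onto the boundary, and the whole argument hinges on the observation that peeling the vertex of \emph{maximal} barycentric weight preserves the carrier face of $p$, so that relative interiority is retained. Keeping that weight strictly positive after subtracting $1$ is precisely what forces the bound $k\ge d_1+2$, i.e. $n_i\ge d_i+2$; the non-interior case (a) only needs the weaker $n_i\ge d_i+1$.
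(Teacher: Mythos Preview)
Your approach---Carath\'eodory plus iterated peeling of a vertex with maximal barycentric weight---is essentially the paper's own argument (the paper peels one vertex from every $\Pc_i$ simultaneously, lowering all $n_i$ by one at once, while you peel one summand at a time; this is cosmetic). Your justification that $(x-v)/(k-1)$ remains in $\int(\Pc_1)$ via $\int(\tau)\subseteq\int(\Pc_1)$ is in fact more explicit than what the paper writes.

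There is, however, one genuine slip. The ``elementary fact'' $\int(A)+B\subseteq\int(A+B)$ is \emph{false} whenever $\dim A<\dim(A+B)$: take $A=[0,1]\times\{0\}$ and $B=\{0\}\times[0,1]$ in $\RR^2$; then $\int(A)+B=(0,1)\times[0,1]$ contains boundary points of $A+B=[0,1]^2$. You invoke this at the end of the sublemma in case (b), writing $\alpha-v=(x-v)+w\in\int((k-1)\Pc_1)+\Qc\subseteq\int((k-1)\Pc_1+\Qc)$; here $\dim((k-1)\Pc_1)=d_1$ is typically strictly smaller than $\dim((k-1)\Pc_1+\Qc)$, so the inclusion does not follow.

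The repair is immediate and already in your hands. When you split $\alpha$ via Lemma~\ref{naibu}, choose $w\in\int(\Qc)$ rather than merely $w\in\Qc$ (Lemma~\ref{naibu} gives precisely $\int(k\Pc_1+\Qc)=\int(k\Pc_1)+\int(\Qc)$). Then $\alpha-v=(x-v)+w\in\int((k-1)\Pc_1)+\int(\Qc)=\int((k-1)\Pc_1+\Qc)$, again by Lemma~\ref{naibu}, and the induction proceeds. Your separate use of the same ``fact'' for the inclusion $\supseteq$ in (b) happens to be valid, because there both summands are built from all the $\Pc_i$ and hence span the same affine hull (the dimension of a Minkowski sum is insensitive to positive dilation factors); but you should state the hypothesis $\dim A=\dim(A+B)$ explicitly rather than claim it for arbitrary polytopes.
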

\begin{proof}
(a) Let $\alpha \in (n_1\Pc_1+\cdots+n_m\Pc_m) \cap \ZZ^N$. Then 
there is $w_i \in n_i\Pc_i$ for each $i$ such that $\alpha=w_1+\cdots+w_m$. 
By Carath\'eodory's Theorem (cf. \cite[Corollary 7.1i]{Sch}), 
there are $(d_i+1)$ affinely independent vertices $v_0^{(i)}, v_1^{(i)},\ldots,v_{d_i}^{(i)} \in \Pc_i \cap \ZZ^N$ of $\Pc_i$ 
such that $w_i=\sum_{j=0}^{d_i} r_j^{(i)} v_j^{(i)}$, where $r_j^{(i)} \geq 0$ and $\sum_{j=0}^{d_i}r_j^{(i)}=n_i$. Thus, 
$\alpha$ can be written like $$\alpha=\sum_{j=0}^{d_1} r_j^{(1)} v_j^{(1)} + \cdots + \sum_{j=0}^{d_m} r_j^{(m)} v_j^{(m)}.$$ 
Since $n_i \geq d_i+1$ and $\sum_{j=0}^{d_i}r_j^{(i)}=n_i$ for each $i$, there is an index $k_i$ such that $r_{k_i}^{(i)} \geq 1$. 
Then $\alpha$ can be decomposed like $\alpha'+\beta^{(1)}+\cdots+\beta^{(m)}$, where 
$\alpha' \in ((n_1-1)\Pc_1+\cdots+(n_m-1)\Pc_m) \cap \ZZ^N$ and $\beta^{(i)} \in \Pc_i \cap \ZZ^N$. 
Since we can do this decomposition whenever $n_i \geq d_i+1$, 
we conclude that $\alpha$ belongs to the right-hand side of \eqref{tousiki}. 
This shows one inclusion. On the other hand, another inclusion is easy to see. 

\noindent
(b) Let $\alpha \in \int(n_1\Pc_1+\cdots+n_m\Pc_m) \cap \ZZ^N$. 
By Lemma \ref{naibu}, we have $\int(n_1\Pc_1+\cdots+n_m\Pc_m)=\int(n_1\Pc_1)+\cdots+\int(n_m\Pc_m)$. 
Thus, there is $w_i \in \int(n_i\Pc_i)$ for each $i$ such that $\alpha=w_1+\cdots+w_m$. 
Then there are $(d_i'+1)$ affinely independent vertices 
$v_0^{(i)}, v_1^{(i)},\ldots,v_{d_i'}^{(i)} \in \Pc_i \cap \ZZ^N$ of $\Pc_i$ 
such that $w_i=\sum_{j=0}^{d_i'} r_j^{(i)} v_j^{(i)}$, where $d_i' \leq d_i$, $r_j^{(i)} > 0$ and $\sum_{j=0}^{d_i'}r_j^{(i)}=n_i$. 
Thus, $\alpha$ can be written like $$\alpha=\sum_{j=0}^{d_1'} r_j^{(1)} v_j^{(1)} + \cdots + \sum_{j=0}^{d_m'} r_j^{(m)} v_j^{(m)}.$$ 
Since $n_i \geq d_i+2 \geq d_i'+2$ and $\sum_{j=0}^{d_i'}r_j^{(i)}=n_i$ for each $i$, there is an index $k_i$ such that $r_{k_i}^{(i)} > 1$. 
Then $\alpha$ can be decomposed like $\alpha'+\beta^{(1)}+\cdots+\beta^{(m)}$, where 
$\alpha' \in \int((n_1-1)\Pc_1+\cdots+(n_m-1)\Pc_m) \cap \ZZ^N$ and $\beta^{(i)} \in \Pc_i \cap \ZZ^N$. 
Since we can do this decomposition whenever $n_i \geq d_i+2$, 
we conclude that $\alpha$ belongs to the right-hand side of \eqref{tousiki2}. 
This shows one inclusion. Another inclusion also follows easily. 
\end{proof}

\smallskip

\begin{proof}[Proof of Theorem \ref{M_IDP}]
(a) Let $\alpha \in n(n_1\Pc_1 + \cdots + n_m \Pc_m) \cap \ZZ^N$ with $n \geq 2$. 
Then $\alpha \in (nn_1\Pc_1 + \cdots + nn_m \Pc_m) \cap \ZZ^N$ and $nn_i \geq d_i+1$ for each $i$. 
By Lemma \ref{kagi} (a), $\alpha$ can be written like $\alpha'+\sum_{i=1}^m\sum_{j=1}^{nn_i-d_i}\beta_j^{(i)}$, 
where $\alpha' \in (d_1\Pc_1 + \cdots + d_m \Pc_m) \cap \ZZ^N$ and $\beta_j^{(i)} \in \Pc_i \cap \ZZ^N$. 
Since $n_i \geq d_i$, it is easy to see that $\alpha'+\sum_{i=1}^m\sum_{j=1}^{nn_i-d_i}\beta_j^{(i)}$ 
can be decomposed into $n$ integer points belonging to $(n_1\Pc_1 + \cdots + n_m \Pc_m) \cap \ZZ^N$. 
This implies that $n_1\Pc_1 + \cdots + n_m \Pc_m$ has IDP. 

\noindent
(b) It is enough to show that for any $\alpha \in n\int(n_1\Pc_1 + \cdots + n_m \Pc_m) \cap \ZZ^N$ with $n \geq 2$, 
$\alpha$ can be written like $\alpha=\beta+\beta_1+\cdots+\beta_{n-1}$, 
where $\beta \in \int(n_1\Pc_1 + \cdots + n_m \Pc_m) \cap \ZZ^N$ and 
$\beta_1,\ldots,\beta_{n-1} \in (n_1\Pc_1 + \cdots + n_m \Pc_m) \cap \ZZ^N$. 

Given $n \geq 2$, let $\alpha \in n\int(n_1\Pc_1 + \cdots + n_m \Pc_m) \cap \ZZ^N$. 
Then we have $n\int(n_1\Pc_1 + \cdots + n_m \Pc_m)=\int(nn_1\Pc_1+\cdots+nn_m\Pc_m)$ by Lemma \ref{naibu}. 
Moreover, by Lemma \ref{kagi} (b), $\alpha$ can be expressed like 
$\alpha'+\sum_{i=1}^m\sum_{j=1}^{nn_i-d_i-1}\beta_j^{(i)}$, 
where $\alpha' \in \int((d_1+1)\Pc_1 + \cdots + (d_m+1) \Pc_m) \cap \ZZ^N$ and $\beta_j^{(i)} \in \Pc_i \cap \ZZ^N$. 
Let $\alpha''=\alpha'+\sum_{i=1}^m\sum_{j=1}^{n_i-d_i-1}\beta_j^{(i)}$. 
Then we have $\alpha'' \in \int(n_1\Pc_1 + \cdots + n_m \Pc_m) \cap \ZZ^N$ and $\alpha$ can be rewritten like 
$\alpha = \alpha''+\sum_{k=1}^{n-1}(\gamma_1^{(k)}+\cdots+\gamma_m^{(k)})$, 
where $\gamma_i^{(k)} \in n_i \Pc_i \cap \ZZ^N$. 
%
%
%
%
%
%
%
\end{proof}

\begin{Remark}
For Theorem \ref{M_IDP} (a), if there is $i$ such that $n_i=d_i-1$, 
then Theorem \ref{M_IDP} (a) is no longer true. For example, let 
$\Pc_1=\con(v_1,v_2,v_3)$ and  $\Pc_2=\con(v_4,v_5,v_6)$, where 
\begin{align*}
&v_1=(1,1,0,0,0,0), \; v_2=(0,0,1,1,0,0), \; v_3=(0,0,0,0,1,1), \\
&v_4=(1,0,0,0,0,1), \; v_5=(0,1,1,0,0,0) \text{ and } v_6=(0,0,0,1,1,0). 
\end{align*}
Then each of $\Pc_1,\Pc_2 \subset \RR^6$ is of dimension 2. 
We consider $n\Pc_1+\Pc_2$, where $n \geq 1$. Then we have 
\begin{align*}
\left(\frac{6n-2}{3}v_1+\frac{1}{3}v_2+\frac{1}{3}v_3\right)
+\frac{2}{3}\left(v_4+v_5+v_6\right)
=(2n,2n,1,1,1,1) \in 2(n\Pc_1+\Pc_2) \cap \ZZ^6.
\end{align*}
Moreover, one sees that 
\begin{align*}
(n\Pc_1+\Pc_2)\cap \ZZ^6&=(\underbrace{\Pc_1 \cap \ZZ^N+\cdots + \Pc_1 \cap \ZZ^6}_n+\Pc_2 \cap \ZZ^6) \\
&\cup (\{(1,1,1,1,1,1)\}+\underbrace{\Pc_1 \cap \ZZ^N+\cdots + \Pc_1 \cap \ZZ^6}_{n-2})
\end{align*}
when $n \geq 2$. Hence, $(2n,2n,1,1,1,1) \in 2(n\Pc_1+\Pc_2)\cap \ZZ^6$ cannot be written as a sum of 
any two integer points contained in $(n\Pc_1+\Pc_2)\cap \ZZ^6$. Namely, 
$n\Pc_1+\Pc_2$ does not have IDP. 
\end{Remark}

For Theorem \ref{motivation} (b), we remain the following: 
\begin{Question}
Work with the same notation as in Theorem \ref{M_IDP}. 
Is it true that $K[n_1\Pc_1 + \cdots + n_m\Pc_m]$ is Koszul 
or the defining ideal (toric ideal) of $K[n_1\Pc_1 + \cdots + n_m\Pc_m]$ has a quadratic Gr\"obner basis 
if $n_i \geq d_i$ for every $1 \leq i \leq m$? 
\end{Question}

\section{Minkowski sum of edge polytopes}\label{minkowski_sum_edge}

In this section, we study IDP of Minkowski sums of edge polytopes. 
After fixing our notation on simple graphs, 
we define the edge polytope and study the dimension of Minkowski sum of edge polytopes (Proposition \ref{mjigen}). 
We also consider the problem when the Minkowski sum of edge polytopes has IDP 
(Theorem \ref{subgraph2}). Finally, we supply some examples of graphs 
which show that the conditions described in Theorem \ref{subgraph2} are necessary 
for Minkowski sums of edge polytopes to have IDP.

\subsection{Dimension of Minkowski sum of edge polytopes}\label{dim}
Let $G$ be a simple graph on the vertex set $V(G)$ with the edge set $E(G)$. 
Throughout this paper, we always assume that graphs are simple, so we omit to say ``simple''. 
We recall several terminologies on graphs. 
\begin{itemize}
\item A graph $G$ is called {\em bipartite} if $V(G)$ can be decomposed into two non-empty subsets 
$U$ and $V$ of $V(G)$ such that $V(G)=U \cup V$, $U \cap V = \emptyset$ and 
every edge $\{i,j\} \in E(G)$ belongs to $U \times V$. 
We also call this partition $U \cup V$ the {\em partition} of the bipartite graph $G$. 
\item A sequence $v_0,v_1,\ldots,v_k$ of vertices in $G$ is called a {\em walk} if 
$\{v_{i-1},v_i\} \in E(G)$ for each $1 \leq i \leq k$. 
A walk is called a {\em path} if $v_i$'s are all distinct. Moreover, 
a walk is called a {\em cycle} if $v_0,v_1,\ldots,v_{k-1}$ are distinct and $v_0=v_k$. 
\item The length of a walk (a cycle) $v_0,v_1,\ldots,v_k$ is defined by $k$. 
A walk in $G$ is called {\em odd} (resp. {\em even}) if its length is odd (resp. even). 
It is well known that $G$ is bipartite if and only if $G$ has no odd cycle. 
\item A subgraph of $G$ is called {\em spanning} if its vertex set is equal to that of $G$. 
\item A {\em forest} is a graph without any cycle. A {\em tree} is a connected forest. 
Note that every forest is bipartite. 
\item We say that $G$ is {\em 2-connected} if the induced subgraph 
with the vertex set $V(G) \setminus \{v\}$ is still connected for any vertex $v$ of $G$. 
A subgraph is called {\em 2-connected component} if it is a maximal 2-connected subgraph. 
\end{itemize}

Let $V(G)=\{1,\ldots,d\}$. 
For $1 \leq i \leq d$, let $\eb_i$ be the $i$th coordinate vectors of $\RR^d$. 
Given an edge $\{i,j\} \in E(G)$, let $\rho(e) \in \RR^d$ denote the vector $\eb_i+\eb_j$. 
We write $\Pc_G$ for the convex hull of the set of integer points $\{\rho(e) : e \in E(G)\}$. 
We call this polytope $\Pc_G$ the {\em edge polytope} of $G$.

In \cite{OH}, Ohsugi and Hibi studied some properties on edge polytopes. 
For example, they obtain the dimension of an edge polytope as follows. 
\begin{Proposition}[{\cite[Proposition 1.3]{OH}}]\label{jigen}
Let $G$ be a connected graph with $d$ vertices. Then one has 
\begin{align*}
\dim \Pc_G=
\begin{cases}
d-2, \;\;&\text{ if $G$ is bipartite}, \\
d-1, \;\;&\text{ if $G$ is non-bipartite}. 
\end{cases}
\end{align*}
\end{Proposition}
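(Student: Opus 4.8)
The plan is to compute $\dim \Pc_G$ by analyzing the affine lattice spanned by the generating vectors $\{\rho(e) : e \in E(G)\}$ and relating its rank to the combinatorial structure of $G$. Since $\dim \Pc_G = \dim \aff(\{\rho(e)\}) $ and all these vectors have the same coordinate-sum $2$, they lie in the affine hyperplane $\{\sum x_i = 2\}$, so $\dim \Pc_G \leq d-1$ immediately. The key is to understand the dimension of the linear span of the difference vectors $\{\rho(e) - \rho(e_0)\}$ for a fixed reference edge $e_0$. Equivalently, I would study the rank over $\RR$ of the $d \times |E(G)|$ incidence-type matrix $M$ whose columns are the vectors $\rho(e) = \eb_i + \eb_j$, and then subtract $1$ to pass from the linear rank of the column space to the affine dimension of the point set.

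First I would set up the matrix $M$ and observe that this is the unsigned incidence matrix of $G$. There is a classical fact (which I would either cite or reprove) that the rank over a field of characteristic $\neq 2$ of the unsigned incidence matrix of a connected graph $G$ on $d$ vertices equals $d-1$ if $G$ is bipartite and $d$ if $G$ is non-bipartite. The underlying reason is the linear dependence $\sum_{i \in U} \eb_i - \sum_{j \in V} \eb_j$ pairing with the columns: in the bipartite case with partition $U \cup V$, assigning weight $+1$ to vertices in $U$ and $-1$ to those in $V$ produces a nontrivial linear functional vanishing on every column $\rho(e)$, which forces the rank to drop; in the non-bipartite case the presence of an odd cycle destroys any such balanced weighting, and one can bootstrap full rank. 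I would prove the non-bipartite full-rank claim directly: pick a spanning tree plus one extra edge completing an odd cycle, and show these $d$ columns are linearly independent.

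The core of the argument, then, is the following two reductions. For the \emph{bipartite} case, I would show $\operatorname{rank} M = d-1$: the kernel of $M^T$ is exactly spanned by the single balanced vector described above (using connectedness to show any functional vanishing on all $\rho(e)$ must be constant on each side and hence determined up to the bipartite sign pattern), giving $\dim \Pc_G = (d-1) - 1 = d-2$. For the \emph{non-bipartite} case, I would show $\operatorname{rank} M = d$, so that the column space is all of $\RR^d$; since the points $\rho(e)$ still lie in an affine hyperplane of $\RR^d$ and span a space of linear dimension $d$, their affine dimension is $d-1$. A clean way to see the affine dimension in the non-bipartite case is to exhibit $d$ affinely independent points $\rho(e_1), \ldots, \rho(e_d)$ explicitly from a spanning subgraph consisting of a spanning tree together with one edge closing an odd cycle.

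The main obstacle I anticipate is carefully justifying the rank computation for the unsigned incidence matrix, specifically establishing full rank $d$ in the non-bipartite case, since the bipartite dependency is easy to write down but its \emph{uniqueness} (that the rank drops by exactly one and no more) and its \emph{failure} in the odd-cycle case require the connectivity and parity arguments to be handled with care. Concretely, the delicate point is showing that an odd closed walk lets one solve for each $\eb_i$ individually as a $\pm$-combination of the $\rho(e)$ along the walk, which is what breaks the bipartite obstruction and yields full rank; once that local computation is set up along a spanning tree anchored at the odd cycle, extending independence across the whole connected graph follows by the standard spanning-tree induction.
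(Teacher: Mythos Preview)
The paper does not give its own proof of this proposition; it is quoted from \cite[Proposition 1.3]{OH} and stated without argument, so there is no in-paper proof to compare against.

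Your approach via the rank of the unsigned vertex--edge incidence matrix is correct and standard. In fact, the paper's proof of the more general Proposition~\ref{mjigen} (the Minkowski-sum analogue) uses exactly the ingredients you outline for the $m=1$ case: a spanning tree supplies $d-1$ affinely independent vectors $\rho(e_1),\ldots,\rho(e_{d-1})$, giving $\dim\Pc_G\ge d-2$; the bipartite hyperplane $\{\sum_{i\in U}x_i=\sum_{j\in V}x_j\}$ forces $\dim\Pc_G\le d-2$ in the bipartite case; and in the non-bipartite case one adjoins one further edge whose $\rho$-vector violates that hyperplane to obtain a $d$th affinely independent point. So your plan is both correct and aligned with the techniques the paper itself uses for the generalization.
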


Similar to this proposition, 
we discuss the dimension of the Minkowski sum of some edge polytopes. 

Let $G_1,\ldots,G_m$ be graphs on the same vertex set $\{1,\ldots,d\}$. 
Let $E(G_i)$ be the edge set of $G_i$ for each $1 \leq i \leq m$. 
We denote by $G_1+\cdots+G_m$ 
the graph on the vertex set $\{1,\ldots,d\}$ with the edge set $\bigcup_{i=1}^mE(G_i)$. 
\begin{Proposition}\label{mjigen}
Let $G_1,\ldots,G_m$ be connected graphs on the same vertex set $\{1,\ldots,d\}$. 
Then one has 
\begin{align*}
\dim(\Pc_{G_1} + \cdots + \Pc_{G_m})=
\begin{cases}
d-2, \quad&\text{ if $G_1+\cdots+G_m$ is bipartite}, \\
d-1, \quad&\text{ if $G_1+\cdots+G_m$ is non-bipartite}. 
\end{cases}
\end{align*}
\end{Proposition}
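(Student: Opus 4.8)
The plan is to reduce the dimension computation to a question about a space of linear functionals. For a nonempty polytope $\Qc \subset \RR^d$ set
$$W(\Qc) = \{c \in \RR^d : \langle c, x\rangle \text{ is constant as } x \text{ ranges over } \Qc\},$$
the annihilator of the direction space of $\Qc$; thus $\dim \Qc = d - \dim W(\Qc)$. Two elementary facts drive everything. First, since $\langle c, x+y\rangle = \langle c,x\rangle + \langle c,y\rangle$, a functional is constant on a Minkowski sum precisely when it is constant on each summand, so $W(\Qc_1 + \cdots + \Qc_m) = \bigcap_{i} W(\Qc_i)$. Second, because $\langle c, \rho(\{u,v\})\rangle = c_u + c_v$, one has $W(\Pc_{G_i}) = \{c : c_u+c_v \text{ takes the same value on every edge } \{u,v\} \in E(G_i)\}$. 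Combining these, it suffices to compute $\dim \bigcap_i W(\Pc_{G_i})$; the asserted values $d-2$ and $d-1$ are equivalent to this intersection having dimension $2$ and $1$, respectively.

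Next I would describe $W(\Pc_{G_i})$ graph-theoretically. Fix $c$ with $c_u+c_v$ constant, say $\equiv \lambda_i$, on $E(G_i)$. If $G_i$ is bipartite with parts $U_i, V_i$, then walking along edges and using connectedness shows $c$ is constant on $U_i$ and constant on $V_i$ (and conversely any such $c$ lies in $W(\Pc_{G_i})$), so $\dim W(\Pc_{G_i}) = 2$; if $G_i$ is non-bipartite, an odd cycle forces these two constants to coincide, so $c$ is globally constant and $\dim W(\Pc_{G_i}) = 1$ --- matching Proposition \ref{jigen}. Consequently $c \in \bigcap_i W(\Pc_{G_i})$ if and only if $c$ is constant on every class of the equivalence relation $\approx$ on $\{1,\dots,d\}$ generated by the rule ``$x \approx y$ whenever $x,y$ lie in a common part of some bipartite $G_i$'' (with all vertices declared equivalent as soon as one $G_i$ is non-bipartite). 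Hence $\dim \bigcap_i W(\Pc_{G_i})$ equals the number of $\approx$-classes, and the whole problem becomes: count these classes.

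The counting is then immediate in two of the three situations. If some $G_i$ is non-bipartite there is a single class, and the union graph $G := G_1 + \cdots + G_m$ inherits that odd cycle and is non-bipartite, matching $d-1$. If $G$ is bipartite with parts $U,V$, then every edge of every (necessarily bipartite) $G_i$ runs between $U$ and $V$, so by connectedness each $G_i$ has bipartition exactly $(U,V)$; thus $\approx$ has exactly the two classes $U$ and $V$, matching $d-2$. The one remaining --- and genuinely delicate --- case is when every $G_i$ is bipartite but $G$ is non-bipartite, where I must show $\approx$ has a single class.

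I expect this last case to be the main obstacle, because the tempting ``even-walk'' argument breaks down: consecutive edges of a walk in $G$ may lie in different $G_i$, so a length-two step need not equate its endpoints under $\approx$. The clean way around this is to argue with invariant sets rather than walks. Suppose $\approx$ had a proper nonempty class $S$. Every part of every $G_i$ lies inside a single $\approx$-class, so $S$ is a union of parts; being a proper nonempty union of the two parts of the connected (hence uniquely bipartitioned) graph $G_i$, it must equal $U_i$ or $V_i$, and this holds for every $i$ simultaneously. But then all $G_i$ share the bipartition $\{S, \{1,\dots,d\}\setminus S\}$, forcing $G$ to be bipartite --- a contradiction. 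Therefore $\approx$ is a single class, $\dim \bigcap_i W(\Pc_{G_i}) = 1$, and $\dim(\Pc_{G_1}+\cdots+\Pc_{G_m}) = d-1$, completing the proof.
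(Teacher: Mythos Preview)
Your proof is correct and takes a genuinely different route from the paper's. The paper argues directly on the primal side: it first pins down $d-2 \le \dim\Pc \le d-1$ using the hyperplane $\sum x_i = 2m$ and a spanning tree of $G_1$; the bipartite case then falls out from the extra hyperplane $\sum_{i\in U}x_i=\sum_{j\in V}x_j$; and in the delicate case (each $G_i$ bipartite but $G$ not) the paper locates a single edge $f\in E(G_2)$ whose addition to a spanning tree of $G_1$ creates an odd cycle, and then verifies by hand that the resulting $d$ points are affinely independent. Your approach is dual: you compute the annihilator $W(\Pc)=\bigcap_i W(\Pc_{G_i})$, identify each $W(\Pc_{G_i})$ with the space of functions constant on the bipartition parts of $G_i$, and reduce the whole question to counting classes of the equivalence relation those parts generate. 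The invariant-set argument you give for the hard case (a proper nonempty class would force all $G_i$ to share a bipartition) is clean and avoids any explicit point construction. The trade-off is that the paper's argument, while more ad hoc, actually exhibits $d$ affinely independent lattice points in $\Pc$, which could be reused; your argument is more conceptual and would scale more gracefully to variants of the statement.
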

\begin{proof}
Let $G=G_1+\cdots+G_m$ and let $\Pc=\Pc_{G_1}+\cdots + \Pc_{G_m}$. 
Since $\Pc$ is contained in the hyperplane defined by 
$\{(x_1,\ldots,x_d) \in \RR^d : \sum_{i=1}^d x_i = 2m\}$, one has $\dim \Pc \leq d-1$. 
On the other hand, since each $G_i$ is connected, each $G_i$ contains a spanning tree. 
Thus, in particular, $G_1$ contains $(d-1)$ edges $e_1,\ldots,e_{d-1}$ such that 
$\rho(e_1),\ldots,\rho(e_{d-1})$ are affinely independent. 
Thus $d-2 \leq \dim \Pc_{G_1} \leq \dim \Pc$. Hence, we have $d-2 \leq \dim \Pc \leq d-1$.

Assume that $G$ is bipartite. Let $U \cup V$ be the partition of $G$. 
Then we see that $\Pc$ is contained in the hyperplane defined by 
$\{(x_1,\ldots,x_d) \in \RR^d : \sum_{ i \in U} x_i = \sum_{j \in V} x_j\}$. 
This implies that $\dim \Pc \leq d-2$. Thus we obtain $\dim \Pc=d-2$. 

Assume that $G$ is not bipartite. 
If there is $1 \leq i \leq m$ such that $G_i$ is not bipartite, 
then $\dim \Pc \geq \dim \Pc_{G_i} = d-1$ by Proposition \ref{jigen}. Hence, $\dim \Pc=d-1$. 
If each $G_i$ is bipartite, since $G$ is non-bipartite, 
there exists an edge $f \in \bigcup_{i=2}^m E(G_i)$ such that $G_1 \cup \{f\}$ has an odd cycle. 
We assume that $f$ is an edge of $G_2$. Let $U_1 \cup V_1$ be the partition of $G_1$.
Then $f \not\in U_1 \times V_1$. Thus $f \in U_1 \times U_1$ or $f \in V_1 \times V_1$, 
say, $f \in U_1 \times U_1$. Since $G_2$ is connected, there is an edge $f' \in E(G_2)$ 
such that $f' \not\in U_1 \times U_1$. Fix some edges $f_i \in E(G_i)$ for each $3 \leq i \leq m$ 
and let $v=\sum_{i=3}^m\rho(f_i)$. Let $e_1,\ldots,e_{d-1}$ be edges in $G_1$ forming its spanning tree. 
We consider the $d$ integer points 
$$\rho(e_1)+\rho(f)+v,\rho(e_2)+\rho(f)+v,\ldots,\rho(e_{d-1})+\rho(f)+v,\rho(e_1)+\rho(f')+v,$$
where each of them belongs to $\Pc \cap \ZZ^d$. Let 
\begin{align*}
v_i=
\begin{cases}
(\rho(e_1)+\rho(f')+v)-(\rho(e_1)+\rho(f)+v)=\rho(f')-\rho(f), \;\;&i=1, \\
(\rho(e_i)+\rho(f)+v)-(\rho(e_1)+\rho(f)+v)=\rho(e_i)-\rho(e_1), &i=2,\ldots,d-1. 
\end{cases}
\end{align*}
If there is $(r_1,\ldots,r_{d-1}) \in \RR^{d-1}$ such that $\sum_{i=1}^{d-1}r_iv_i=0$, 
then $r_1(\rho(f)-\rho(f'))=\sum_{i=2}^{d-1}r_i(\rho(e_i)-\rho(e_1))$. 
Since each $\rho(e_i)-\rho(e_1)$ is contained in the hyperplane 
$\Hc = \{(x_1,\ldots,x_d) \in \RR^d : \sum_{i \in U_1}x_i=\sum_{j \in V_1}x_j = 0\} \subset \RR^d$, 
so should be $r_1(\rho(f)-\rho(f'))$. However, since $f \in U_1 \times U_1$ 
and $f' \not\in U_1 \times U_1$, $\rho(f)-\rho(f')$ is never contained in $\Hc$. 
Hence $r_1=0$. Moreover, since $\rho(e_1),\ldots,\rho(e_{d-1})$ are affinely independent, 
one has $r_2=\cdots=r_{d-1}=0$. Thus $r_1=r_2=\cdots=r_{d-1}=0$. 
This implies that $v_1,\ldots,v_{d-1}$ are linearly independent. 
Hence $\dim \Pc \geq d-1$. Therefore, it follows that $\dim \Pc=d-1$, as required. 
\end{proof}


\bigskip

\subsection{A sufficient condition for Minkowski sums of edge polytopes to have IDP}\label{when_IDP}

In this section, we discuss the problem when the Minkowski sum of edge polytopes has IDP. 
Namely, we give a partial answer for Problem \ref{toi} in the case of edge polytopes. 

We say that a connected graph $G$ satisfies the {\em odd cycle condition} 
if for arbitrary two odd cycles $C$ and $C'$ in $G$ which have no common vertex, 
there exists an edge of $G$ joining some vertex of $G$ with some vertex of $G'$. 
For the normality or IDP of edge polytopes, the following is known. 
\begin{Theorem}[\cite{OH}, see also \cite{SVV}]\label{occ}
Let $G$ be a connected graph. Then the following four conditions are equivalent: 
\begin{itemize}
\item[(a)] $\Pc_G$ is normal; 
\item[(b)] $\Pc_G$ has IDP; 
\item[(c)] $\Pc_G$ has a unimodular covering; 
\item[(d)] $G$ satisfies the odd cycle condition. 
\end{itemize}
\end{Theorem}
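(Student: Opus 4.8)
The plan is to prove the cycle of implications (a) $\Rightarrow$ (d) $\Rightarrow$ (c) $\Rightarrow$ (b) $\Rightarrow$ (a). Two of these require nothing special about $G$: a unimodular covering always forces IDP, since for $\alpha \in k\Pc_G \cap \ZZ^d$ the point $\alpha/k$ lies in some unimodular simplex and one may peel off a single lattice generator at a time by induction on $k$, giving (c) $\Rightarrow$ (b); and (b) $\Rightarrow$ (a) was already observed in the introduction. So the content lies entirely in (a) $\Rightarrow$ (d) and (d) $\Rightarrow$ (c). Throughout I would use two structural facts about edge polytopes: the lattice points of $\Pc_G$ are the vectors $\rho(e)$, and a set of edges $F$ yields affinely independent $\{\rho(e):e\in F\}$ exactly when every connected component of $F$ is a tree or contains a single cycle which is odd.

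For (a) $\Rightarrow$ (d) I would argue contrapositively. Suppose $G$ fails the odd cycle condition, witnessed by odd cycles $C$ and $C'$ with disjoint vertex sets $A$ and $B$ and no edge of $G$ between $A$ and $B$; write $|A|=2s+1$ and $|B|=2t+1$. Consider
\[
\alpha=\sum_{v\in A}\eb_v+\sum_{v\in B}\eb_v.
\]
Summing $\rho(e)$ over the $2s+1$ edges of $C$ gives $2\sum_{v\in A}\eb_v$, and similarly for $C'$, so $\alpha$ is a nonnegative combination of edge vectors of total weight $\tfrac{2s+1}{2}+\tfrac{2t+1}{2}=s+t+1$; being integral, $\alpha\in(s+t+1)\Pc_G\cap\ZZ^d$. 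Any decomposition of $\alpha$ into $s+t+1$ edge vectors would be a perfect matching of $A\cup B$ by edges of $G$, because every coordinate of $\alpha$ on $A\cup B$ equals $1$. Since no edge joins $A$ to $B$, such a matching would restrict to a perfect matching of $A$ alone, which is impossible as $|A|$ is odd. Hence $\Pc_G$ is not normal, proving (a) $\Rightarrow$ (d).

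The main work, and the step I expect to be the obstacle, is (d) $\Rightarrow$ (c). Here I would single out the top-dimensional simplices $\con\{\rho(e):e\in F\}$, where $F$ is a spanning tree when $G$ is bipartite and a spanning connected subgraph with a unique, odd cycle when $G$ is non-bipartite, and verify they are unimodular (a spanning odd-unicyclic subgraph gives a lattice basis of $L(\Pc_G)$). Given $p=\sum_e\lambda_e\rho(e)\in\Pc_G$, let $H$ be the subgraph of edges with $\lambda_e>0$. Every even cycle in $H$, and every pair of odd cycles lying in one component of $H$, supplies a linear relation among the relevant $\rho(e)$, which I would exploit to push some $\lambda_e$ to $0$ while staying in $\Pc_G$, thereby strictly shrinking $H$. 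The only remaining obstruction is two odd cycles in different components of $H$, and this is precisely where (d) enters: the odd cycle condition furnishes an edge of $G$ connecting them, producing a relation that again reduces the support. Iterating drives $H$ into a forest, respectively a spanning odd-unicyclic subgraph, placing $p$ in a unimodular simplex, which yields the covering. The delicate points are confirming that each reduction keeps us inside $\Pc_G$, that the process terminates, and the unimodularity of the resulting simplices; without (d) the point $\alpha$ constructed above shows that no such covering can exist, so the odd cycle condition is genuinely indispensable here.
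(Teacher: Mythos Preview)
The paper does not give its own proof of this theorem: it is stated as a citation of \cite{OH} and \cite{SVV}, followed only by the remark that the equivalence of (a) and (b), while not explicit in \cite{OH}, is contained in the proof of \cite[Theorem~2.2]{OH}. There is therefore no proof in the present paper to compare your attempt against.

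That said, your outline tracks the strategy of \cite{OH} reasonably well, but two points are not yet in order. In (a) $\Rightarrow$ (d), your construction shows $\alpha$ cannot be written as a sum of $s+t+1$ edge vectors, which is failure of IDP; to conclude failure of \emph{normality} you also need $\alpha\in L((s+t+1)\Pc_G)$. This holds because $G$ is connected and contains an odd cycle, so the affine lattice of $\Pc_G$ is all of $\ZZ^d$ on the hyperplane $\sum x_i=2$, but you should say so. In (d) $\Rightarrow$ (c), the edge $f$ furnished by the odd cycle condition between two odd cycles in different components of $H$ typically does \emph{not} lie in the current support $H$, so invoking the associated relation does not ``reduce the support'' in the naive sense; one must allow the coefficient on $f$ to become positive and track a subtler termination invariant. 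You correctly flag this step as delicate, and indeed it is where the substantive argument in \cite{OH} resides; as written, your sketch does not yet close this gap.
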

Note that although the equivalence of (a) and (b) is not mentioned explicitly, 
this equivalence is essentially obtained in the proof of \cite[Theorem 2.2]{OH}.

In the case of Minkowski sums of edge polytopes, 
although it seems difficult to obtain a necessary and sufficient condition to have IDP or to be normal, 
we give a sufficient condition to have IDP as follows. 

\begin{Theorem}\label{subgraph2}
Let $G_1$ be a connected graph and assume that arbitrary two odd cycles in $G_1$ always have a common vertex. 
Let $G_2$ be a subgraph of $G_1$ (not necessarily connected). 
Then $\Pc_{G_1}+\Pc_{G_2}$ has IDP, and thus, this is normal. 
\end{Theorem}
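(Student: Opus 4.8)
The plan is to reduce the IDP question for $\Pc_{G_1}+\Pc_{G_2}$ to a covering-type statement combined with the structure theorem (Theorem \ref{occ}). The first observation I would make is that every integer point of $k(\Pc_{G_1}+\Pc_{G_2})$ corresponds to a multiset of edges: a point in $\Pc_{G_1}$ is (a convex combination encoded by) choosing edges of $G_1$, and since $G_2$ is a subgraph of $G_1$, every edge of $G_2$ is already an edge of $G_1$. Thus the natural target is to show that $\Pc_{G_1}+\Pc_{G_2}$ behaves like an edge polytope of a closely related graph, or at least that its decomposition is controlled by the odd-cycle structure of $G_1$ alone. The hypothesis that any two odd cycles of $G_1$ share a vertex is precisely the extreme case of the odd cycle condition, so by Theorem \ref{occ} the polytope $\Pc_{G_1}$ itself has IDP; the work is to propagate this to the Minkowski sum.

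Concretely, I would take $\alpha \in k(\Pc_{G_1}+\Pc_{G_2}) \cap \ZZ^d$ and write $\alpha \in k\Pc_{G_1} \cap \ZZ^d$ shifted by $k\Pc_{G_2} \cap \ZZ^d$, using $k(\Pc_{G_1}+\Pc_{G_2})=k\Pc_{G_1}+k\Pc_{G_2}$. The goal is to produce a decomposition $\alpha = \beta_1+\cdots+\beta_k$ with each $\beta_j \in (\Pc_{G_1}+\Pc_{G_2})\cap \ZZ^d$. I would first peel off a single edge $\rho(e)$ with $e \in E(G_2) \subseteq E(G_1)$ from the $\Pc_{G_2}$-part, so that it suffices to decompose a point of $k\Pc_{G_1}+ (k-1)\Pc_{G_2}$ and then redistribute. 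The combinatorial heart is an edge-exchange or walk-rearrangement argument: any lattice point in a dilated edge polytope decomposes into edges once the odd-cycle obstruction is absent, and the single-odd-component hypothesis on $G_1$ guarantees that when one tries to split $\alpha$, the ``bad'' configurations (two vertex-disjoint odd cycles contributing half-integer weights) never arise. I would make this precise by translating the half-integral witnesses from the Remark-style obstruction into an odd-closed-walk decomposition and showing each such walk can be paired against an edge of the $G_2$-summand.

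The step I expect to be the main obstacle is controlling the interaction between the two summands when the $\Pc_{G_2}$-contribution forces a ``defect'' of non-integrality in the natural edge decomposition. In a single edge polytope $\Pc_G$, a lattice point of $k\Pc_G$ that fails to split into edges does so exactly because of two vertex-disjoint odd cycles, and the hypothesis kills this for $G_1$; but after adding $\Pc_{G_2}$, one must check that no new obstruction is created by the freedom in how the $G_2$-edges attach to the $G_1$-edges. The key technical lemma I would aim for is: any odd closed walk arising in the combined decomposition can be rerouted, using a shared vertex of the relevant odd cycles in $G_1$, into a sum of $\rho(e)$'s with $e \in E(G_1)$ together with at most one $\rho(f)$ with $f \in E(G_2)$. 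Establishing that rerouting — essentially a unimodular-covering or Euler-tour argument on the multigraph formed by $k$ copies of $G_1$-edges and $k$ copies of $G_2$-edges — is where the real content lies; once it is in place, IDP follows by induction on $k$, and normality is then automatic since IDP implies normality as recorded in the introduction.
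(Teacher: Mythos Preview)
Your plan has a genuine gap at the very first reduction. You write that you will ``peel off a single edge $\rho(e)$ with $e \in E(G_2)$ from the $\Pc_{G_2}$-part'' and then induct on $k$. But a point $\alpha \in k(\Pc_{G_1}+\Pc_{G_2})\cap\ZZ^d$ only comes with a decomposition $\alpha=w_1+w_2$ where $w_1\in k\Pc_{G_1}$ and $w_2\in k\Pc_{G_2}$ are \emph{real} points; neither is a priori a lattice point, so there is no integer edge to peel off. Equivalently, the containment $(k\Pc_{G_1}+k\Pc_{G_2})\cap\ZZ^d \supseteq (k\Pc_{G_1}\cap\ZZ^d)+(k\Pc_{G_2}\cap\ZZ^d)$ is the trivial direction; the whole theorem is about the reverse. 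Your proposed key lemma (``any odd closed walk in the combined decomposition can be rerouted \dots\ into a sum of $\rho(e)$'s with $e\in E(G_1)$ together with at most one $\rho(f)$ with $f\in E(G_2)$'') does not address this: even if such a rerouting exists, it does not tell you that the total number of $G_2$-edges in the final integer decomposition is at least $k$, which is exactly what is needed to split $\alpha$ into $k$ points of $(\Pc_{G_1}+\Pc_{G_2})\cap\ZZ^d$.

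The paper's proof isolates precisely this quantitative issue. One writes $\alpha=\sum_{e}r_e\rho(e)+\sum_{e'}r'_{e'}\rho(e')$, strips off the integer parts $\lfloor r_e\rfloor,\lfloor r'_{e'}\rfloor$, and is left with a lattice point $\beta=\sum_{e\in E\cup E'}c_e\rho(e)$ with all $0<c_e<1$ supported on edges of $G_1$. The key lemma (Lemma~\ref{key}) then says: for any designated subset $E'\subset E$ with fractional weight $q'=\sum_{e\in E'}c_e$, one can integerize $\beta=\sum a_e\rho(e)$ with $a_e\in\ZZ_{\geq 0}$ \emph{and} $\sum_{e\in E'}a_e\geq q'$. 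The cycle-based coefficient shifts you gesture at (along even cycles, or along two odd cycles joined at their common vertex) are exactly the mechanism, but at each step one must choose the sign of the shift so as not to decrease the $E'$-weight; this bookkeeping, carried through an induction on the number of shifts, is the actual technical content. Once the lemma gives enough integer weight on the $G_2$-edges, one can rebalance so that exactly $k$ edges come from each summand, and IDP follows. Your outline never formulates this weight-preservation constraint, and without it the argument cannot close.
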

\begin{proof}
Let $\Pc=\Pc_{G_1}+\Pc_{G_2}$ and fix $\alpha \in k\Pc \cap \ZZ^d$ for a given positive integer $k$. 
Then $\alpha$ can be written like 
$$\alpha=\sum_{e \in E(G_1)}r_e \rho(e)+\sum_{e' \in E(G_2)}r_{e'}' \rho(e'),$$
where $\sum_{e \in E(G_1)}r_e=\sum_{e' \in E(G_2)}r_{e'}'=k$ and $r_e \geq 0$ (resp. $r_{e'}' \geq 0$) 
for each $e \in E(G_1)$ (resp. $e' \in E(G_2)$). 

Let $E=\{e \in E(G_1) : r_e \not\in \ZZ\}$ and $E'=\{e' \in E(G_2) : r_{e'}' \not\in \ZZ\}$. 
If $E \cap E' \not=\emptyset$, then for each $e \in E \cap E'$, 
we replace $r_e$ by $\lfloor r_e \rfloor$ and $r_e'$ by $r_e'+r_e-\lfloor r_e \rfloor$. 
Then $\alpha$ does not change and the number of edges in $E \cap E'$ decreases. 
After such replacements for all elements in $E \cap E'$, we may assume that $E \cap E' = \emptyset$. 
Then $\sum_{e \in E(G_1)}r_e$ becomes less than or equal to $k$ 
but $\sum_{e' \in E(G_2)}r_{e'}'$ becomes more than or equal to $k$.

Here one has 
\begin{align}\label{arufa}
\alpha=\underbrace{\sum_{e \in E(G_1)}\left\lfloor r_e \right\rfloor \rho(e)+
\sum_{e' \in E(G_2)} \left\lfloor r_{e'}' \right\rfloor \rho(e')}_{\in \ZZ^d}+
\underbrace{\sum_{e \in E} c_e \rho(e)+\sum_{e' \in E'} c_{e'}' \rho(e')}_{\in \ZZ^d}, 
\end{align}
where $c_e=r_e-\lfloor r_e \rfloor$ and $c_{e'}'=r_{e'}'-\lfloor r_{e'}' \rfloor$. 
Then $0 < c_e < 1$ (resp. $0 < c_{e'}' <1$) for each $e \in E$ (resp. $e' \in E'$). 
Let us consider the integer point $\beta=\sum_{e \in E} c_e \rho(e)+\sum_{e' \in E'} c_{e'}' \rho(e')$. 
Since $G_2$ is a subgraph of $G_1$, one has $E' \subset E(G_2) \subset E(G_1)$. 
Thus $\beta$ belongs to $q \Pc_{G_1} \cap \ZZ^d$, where $q=\sum_{e \in E}c_e + \sum_{e' \in E'}c_{e'}'$.

Now, Lemma \ref{key} below guarantees that $\beta$ can be written like 
$$\beta=\sum_{e \in E} a_e \rho(e) + \sum_{e \in E'} a_{e'}' \rho(e'),$$ 
where $a_j,a_j' \in \ZZ_{\geq 0},$ 
$\sum_{e \in E} a_e + \sum_{e' \in E'} a_{e'}'=q$ and $\sum_{e' \in E'} a_{e'}' \geq \sum_{e' \in E'}c_{e'}'$. 
From \eqref{arufa}, one has 
$$\alpha=
\underbrace{\sum_{e \in E(G_1)}\left\lfloor r_e \right\rfloor \rho(e)+\sum_{e \in E} a_e \rho(e)}_{\in \ZZ_{\geq 0}(\Pc_{G_1} \cap \ZZ^d)}
+\underbrace{\sum_{e' \in E(G_2)} \left\lfloor r_{e'}' \right\rfloor \rho(e')+\sum_{e' \in E'} a_{e'}' \rho(e')}_{\in \ZZ_{\geq 0}(\Pc_{G_2} \cap \ZZ^d)}.$$ 
Hence we can rewrite $\alpha$ like 
$$\alpha=\sum_{e \in E(G_1)}b_e \rho(e) + \sum_{e \in E(G_2)} b_{e'}' \rho(e'),$$ 
where $b_e\in \ZZ_{\geq 0}$ (resp. $b_{e'}' \in \ZZ_{\geq 0}$) for each $e \in E(G_1)$ (resp. $e' \in E(G_2)$), 
$\sum_{e \in E(G_1)}b_e+ \sum_{e' \in E(G_2)} b_{e'}'=2k$, 
$\sum_{e \in E(G_1)}b_e \leq k$ and $\sum_{e' \in E(G_2)}b_{e'}' \geq k$. 
Since $E' \subset E(G_1)$, we obtain an expression $\alpha$ as above satisfying 
$\sum_{e \in E(G_1)}b_e =\sum_{e' \in E(G_2)}b_{e'}' = k$. 
This means that $\alpha$ can be written as a sum of $k$ integer points in $\Pc \cap \ZZ^d$. 
Therefore, $\Pc$ has IDP, as desired. 
\end{proof}

\begin{Lemma}\label{key}
Let $G$ be a connected graph on the vertex set $\{1,\ldots,d\}$ such that 
arbitrary two odd cycles in $G$ always have a common vertex. 
Fix a positive integer $q$ and let $\alpha \in q \Pc_G \cap \ZZ^d$ having an expression 
$\alpha = \sum_{e \in E}r_e \rho(e)$, where $E \subset E(G)$ and $0 < r_e < 1$ for each $e \in E$. 
Let $E'$ be a subset of $E$ and let $q'=\sum_{e \in E'} r_e$. 
Then there exist nonnegative integers $a_e$ for $e \in E$ such that 
$\alpha=\sum_{e \in E}a_e \rho(e)$ satisfying $\sum_{e \in E'} a_e \geq q'$. 
\end{Lemma}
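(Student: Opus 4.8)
The plan is to prove the lemma by an iterative rounding procedure on the fractional edge weights, at each step decreasing the number of non-integral weights while never decreasing the total weight sitting on $E'$. First I would record two easy structural facts. Since every $\rho(e)$ has coordinate sum $2$ and $\alpha$ has coordinate sum $2q$, any expression $\alpha=\sum_{e\in E}a_e\rho(e)$ automatically satisfies $\sum_{e\in E}a_e=q$; thus the total weight is fixed and it suffices to keep the $E'$-weight from dropping below $q'$. Moreover, writing $\alpha_v=\sum_{e\ni v}r_e$ for the (integral) $v$-th coordinate of $\alpha$, no vertex of the fractional support can be incident to exactly one fractional edge, since a single weight in $(0,1)$ could not combine with integral contributions to give the integer $\alpha_v$. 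Hence every vertex touched by a fractional edge meets at least two fractional edges, so the fractional support always contains a cycle; I would maintain this as an invariant throughout the process.

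The core of the argument is to produce, at each stage, a degree-preserving perturbation direction $\delta\in\{-1,0,1\}^{E}$, that is, a nonzero vector with $\sum_{e\ni v}\delta_e=0$ for every $v$, supported on the current fractional edges. If the fractional support contains an even cycle $C$, I take $\delta$ to be the alternating $\pm1$ assignment along $C$; the equal number of $+1$ and $-1$ edges at each vertex of $C$ gives $\sum_{e\ni v}\delta_e=0$. If no even cycle is present, then within some component the fractional support is non-bipartite and, not being a single odd cycle (which is excluded because an isolated odd cycle with integral degrees would force all weights equal to $1/2$ and hence a non-integral total), it contains two distinct odd cycles. Here the hypothesis enters decisively: these two odd cycles share a vertex $w$, so traversing one and then the other from $w$ produces a closed walk of even length, and its alternating $\pm1$ labeling is a well-defined element of $\{-1,0,1\}^{E}$ that preserves every vertex degree. (If the two cycles shared an edge, their symmetric difference would contain an even cycle, contradicting the absence of even cycles.) This figure-eight construction is exactly what the vertex-sharing hypothesis buys: two vertex-disjoint odd cycles could only be coupled through a connecting path, forcing weight $\pm2$ on its edges and destroying the clean $\{-1,0,1\}$ structure.

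With such a $\delta$ in hand I would perform a line search: replacing $r$ by $r+t\delta$ preserves $\alpha$ (since all degrees are unchanged) and changes the $E'$-weight by $t\sum_{e\in E'}\delta_e$, so I choose the sign of $t$ making this quantity nonnegative and increase $|t|$ until some fractional weight first reaches $0$ or $1$. That edge then carries an integral weight and leaves the fractional support, strictly decreasing the number of non-integral edges while the remaining degrees stay integral; by the observation above the invariant (every fractional vertex meeting at least two fractional edges) is restored. Iterating terminates with all weights integral, yielding nonnegative integers $a_e$ with $\alpha=\sum_{e\in E}a_e\rho(e)$ and, since the $E'$-weight never decreased, $\sum_{e\in E'}a_e\ge q'$. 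I expect the main obstacle to be guaranteeing that a degree-preserving perturbation always exists in the non-bipartite case: this is precisely where the assumption that any two odd cycles meet in a vertex is needed, and verifying both the figure-eight labeling and the reduction of the edge-sharing case to an even cycle will require the most care.
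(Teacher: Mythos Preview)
Your proposal is correct and follows essentially the same approach as the paper: both proofs iteratively round the fractional weights by pushing along alternating even cycles or figure-eights of two odd cycles meeting at a vertex, choosing the direction so that the $E'$-weight does not decrease. Your presentation is somewhat more streamlined---you phrase it as a single monotonicity argument rather than the paper's explicit induction on the number of rounding steps---but the underlying mechanism is identical.
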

\begin{proof}
Given $\alpha \in q \Pc_G \cap \ZZ^d$ with an expression 
$\alpha = \sum_{e \in E}r_e \rho(e)$, where $E \subset E(G)$ and $0 < r_e < 1$ for each $e \in E$, 
let $H$ be the subgraph of $G$ whose edge set is $E$. 
Since $\alpha$ is an integer point but each $r_e$ is not an integer, 
every vertex of $H$ is always contained in at least two edges. Thus $H$ contains cycles. 

\noindent
{\bf (The first step)} 

First, we claim that $\alpha$ can be rewritten like 
$\alpha=\sum_{e \in E}a_e \rho(e)$, where $a_e \in \ZZ_{\geq 0}$ for each $e \in E$, 
by applying the following procedures (i) and (ii). 

(i) If $H$ contains an even cycle with the edges $e_1,\ldots,e_{2l}$, 
then let $\epsilon=\min\{r_{e_i} : 1 \leq i \leq 2l\}$. Without loss of generality, 
we may set $r_{e_1}=\epsilon$. We replace $r_{e_{2j-1}}$ by $r_{e_{2j-1}}-\epsilon$ 
and $r_{e_{2j}}$ by $r_{e_{2j}}+\epsilon$ for each $1 \leq j \leq l$. 
Then $\alpha$ is invariant after these replacements. 
On the other hand, the number of edges $e$ with $0 < r_e <1$ decreases at least one. 
If $e$ satisfying $1 \leq r_e < 2$ appears, then we replace $r_e$ by $r_e-1$ 
and reset $\alpha$ by $\alpha - \rho(e)$. 
We reset $H$ by the subgraph of $G$ whose edges $e \in E$ satisfy $0 < r_e <1$. 
Then such new $H$ also contains cycles. 
We repeat this procedure until $H$ contains no even cycle. 

(ii) Assume that $H$ contains no even cycle. 
Then it is easy to see that each 2-connected component of the graph is either one edge or an odd cycle. 
Thus $H$ contains at least one odd cycle. 
If there is a 2-connected component which is one edge, 
then $H$ should contain at least two odd cycles which have no common vertex, a contradiction. 
Moreover, if $H$ contains only one odd cycle, then $H$ consists of only that odd cycle. 
In this case, the sum of the entries of $\alpha$ should be odd, 
a contradiction to $\alpha \in \{(x_1,\ldots,x_d) \in \ZZ^d : \sum_{i=1}^d x_i = 2q\}$.

Hence, all 2-connected components of $H$ are odd cyles and $H$ contains at least two odd cycles. 
By our assumpstion, two odd cycles in $H$ have one common vertex and such common vertex is unique. 
Let $C$ and $C'$ be two odd cycles in $H$ having a unique common vertex $v$, 
let $v=v_1,v_2,\ldots,v_{2p+1}$ (resp. $v=v_1',v_2',\ldots,v_{2p'+1}'$) 
be vertices of $C$ (resp. $C'$) and let 
$e_i=\{v_i,v_{i+1}\}$ (resp. $e_i'=\{v_i',v_{i+1}'\}$) for $1 \leq i \leq 2p+1$ 
(resp. $1 \leq i \leq 2p'+1$), where $v_{2p+2}=v_1$ (resp. $v_{2p'+2}'=v_1'$). 
Let $\epsilon=\min\{r_{e_i}, r_{e_{i'}'}' : 1 \leq i \leq 2p+1, 1 \leq i' \leq 2p'+1\}$, say, $r_{e_1}=\epsilon$. 
We replace $r_{e_{2j-1}}$ (resp. $r_{e_{2j'}'}'$) by $r_{e_{2j-1}}-\epsilon$ (resp. $r_{e_{2j'}'}'-\epsilon$) 
for $1 \leq j \leq p+1$ (resp. $1 \leq j' \leq p'$), and 
$r_{e_{2\ell}}$ (resp. $r_{e_{2\ell'-1}'}'$) by $r_{e_{2\ell}}+\epsilon$ (resp. $r_{e_{2\ell'-1}'}'+\epsilon$) 
for $1 \leq \ell \leq p$ (resp. $1 \leq \ell' \leq p'+1$). Then $\alpha$ is invariant after these replacements 
and the number of edges $e$ with $0 < r_e <1$ decreases at least one. 
If $e$ satisfying $1 \leq r_e < 2$ appears, then we replace $r_e$ by $r_e-1$ 
and reset $\alpha$ by $\alpha - \rho(e)$. 
We reset $H$ by the subgraph of $G$ whose edges $e \in E$ satisfy $0 < r_e <1$. 
If such new $H$ also contains odd cycles, we repeat this until $H$ contains no cycle. 

Note that this algorithm terminates with finite procedures. 
After these operations (i) and (ii), we eventually obtain an expression 
\begin{align}\label{hyouji}
\alpha=\sum_{e \in E}a_e \rho(e), \text{ where } a_e \in \ZZ_{\geq 0}. 
\end{align}

\smallskip

Next, we prove that if we do the above procedures (i) and (ii) more properly, 
then we obtain a required expression of $\alpha$ for any subset $E' \subset E$ with $q'=\sum_{e \in E'}r_e$. 
In the following second and third steps, 
we prove that $\sum_{e \in E'}a_e \geq q'$ by induction on the number of the above procedures (i) and (ii). 
Assume that we obtain an expression \eqref{hyouji} with $N$ steps.

\noindent
{\bf (The second step)} 

When $N=1$, $H$ consists of one even cycle or two odd cycles having a unique common vertex. 
\begin{itemize}
\item[(i)] When $H$ is one even cycle with the edges $e_1,\ldots,e_{2l}$, 
since $N=1$, each of $r_{e_1},\ldots,r_{e_{2l}}$ should be 
$$r_{e_{2i}}=\epsilon \;\text{ and }\; r_{e_{2i-1}}=1 - \epsilon \;\text{ for }\; 1 \leq i \leq l 
\;\text{ with some }\; 0 < \epsilon < 1.$$
\item[(ii)] When $H$ consists of two odd cycles $C$ and $C'$ having a unique common vertex $v$, 
let $v=v_1,v_2,\ldots,v_{2p+1}$ (resp. $v=v_1',v_2',\ldots,v_{2p'+1}'$) 
be vertices of $C$ (resp. $C'$), let $e_i=\{v_i,v_{i+1}\}$ (resp. $e_i'=\{v_i',v_{i+1}'\}$) for $1 \leq i \leq 2p+1$ 
(resp. $1 \leq i \leq 2p'+1$), where $v_{2p+2}=v_1$ (resp. $v_{2p'+2}'=v_1'$) 
and let $\epsilon=\min\{r_{e_i}, r_{e_{i'}'}' : 1 \leq i \leq 2p+1, 1 \leq i' \leq 2p'+1\}$. 
Then each of $r_{e_1},\ldots,r_{e_{2p+1}}, r_{e_1'}',\ldots,r_{e_{2p'+1}'}'$ should be 
\begin{align*}
&r_{e_{2j}}=r_{e_{2j'-1}'}'=\epsilon \;\text{ for }\; 1 \leq j \leq p \text{ and } 1 \leq j' \leq p'+1 \\
\;\text{ and }\; &r_{e_{2j-1}}=r_{e_{2j'}'}'=1-\epsilon \;\text{ for }\; 1 \leq j \leq p+1 \text{ and } 1 \leq j' \leq p'. 
\end{align*}
\end{itemize}
In both cases, let $L_1=\{e \in E(H) : r_e=\epsilon\}$ and $L_2=\{e \in E(H) : r_e=1-\epsilon\}$. 
Let $m_1=|L_1 \cap E'|$ and $m_2=|L_2 \cap E'|$. 
If $m_1 \geq m_2$, then we replace $r_e$ by $r_e+1-\epsilon$ for each $e \in L_1$ 
and we also replace $r_{e'}$ by $r_{e'}-1+\epsilon$ for each $e' \in L_2$. 
Thus we obtain 
$$q'=\sum_{e \in E'}r_e=\epsilon m_1 + (1-\epsilon)m_2=m_2+\epsilon(m_1-m_2) \leq m_1$$ 
by $0 < \epsilon < 1$ and $m_1 \geq m_2$. If $m_2 \geq m_1$, after similar replacements, we obtain 
$q'=\epsilon m_1 + (1-\epsilon)m_2=m_1+(1-\epsilon)(m_2-m_1) \leq m_2.$ 
These mean that $\sum_{e \in E'} a_e=\max\{m_1,m_2\} \geq q'$. 

\noindent
{\bf (The third step)} 

Assume $N > 1$. We do the procedure (i) or (ii) as in the first step. 
\begin{itemize}
\item[(i)] When $H$ contains an even cycle with the edges $e_1,\ldots,e_{2l}$, 
let $L_1=\{ e_{2i-1} : 1 \leq i \leq l\}$ and $L_2=\{ e_{2i} : 1 \leq i \leq l\}$. 
\item[(ii)] When $H$ contains no even cycle, 
there are two odd cycles $C$ and $C'$ having a unique common vertex $v$. 
Work with the same notation as in the second step. 
Let $L_1=\{e_{2i-1} : 1 \leq i \leq p+1\} \cup \{e_{2i'} : 1 \leq i' \leq p'\}$ and 
$L_2=\{e_{2i} : 1 \leq i \leq p\} \cup \{e_{2i'-1} : 1 \leq i' \leq p'+1\}$. 
\end{itemize}
In both cases, let $m_1=|L_1 \cap E'|$ and $m_2=|L_2 \cap E'|$. 
Assume that $m_1 \geq m_2$. (The case $m_1 \leq m_2$ can be discussed by the same manner.) 
Then we set $\epsilon = \min\{r_{e'} : e' \in L_2\}$ and we replace $r_e$ by $r_e+\epsilon$ for each $e \in L_1$ 
and $r_{e'}$ by $r_{e'}-\epsilon$ for each $e' \in L_2$. 
After these replacements, if there is $r_e$ with $r_e \geq 1$ (but $r_e<2$), 
then we reset $r_e$ by $r_e-1$. Let 
$E''=\{e \in E' : r_e \text{ becomes } r_e \geq 1 \text{ after the replacements}\}$. 
Then $q'=\sum_{e \in E'}r_e$ changes into $q'+\epsilon(m_1-m_2)-|E''|$ after the replacements. 
By the inductive hypothesis, there exist $a_e$'s such that 
$\alpha - \sum_{e \in E''}\rho(e)=\sum_{e \in E'} a_e \rho(e)$ 
with $a_e \in \ZZ_{\geq 0}$ and $\sum_{e \in E'}a_e \geq q'+\epsilon(m_1-m_2)-|E''|$. 
Hence, we obtain $$|E''|+\sum_{e \in E'} a_e \geq q' + \epsilon(m_1-m_2) \geq q'.$$ 
Since $\alpha=\sum_{e \in E''}\rho(e)+\sum_{e \in E'} a_e\rho(e)$ and 
$|E''|+\sum_{e \in E'}a_e \geq q'$, we obtain the required assertion. 
\end{proof}

\begin{Remark}
The condition ``arbitrary two odd cycles in $G_1$ always have a common vertex'' in Theorem \ref{subgraph2} is 
stronger condition than the odd cycle condition. 
\end{Remark}

\smallskip

\subsection{Examples}
Finally, we conclude this paper by the following examples, 
which show that each condition described in Theorem \ref{subgraph2} is necessary.

\begin{Examples}
(a) The following example shows that 
the assumption ``two odd cycles always have a common vertex'' is necessary. 
Let $G_1$ and $G_2$ be graphs in Figure \ref{counter2}. Then $G_2$ is a subgraph of $G_1$. We see that 
\begin{figure}[htb!]
\centering
\includegraphics[scale=0.33]{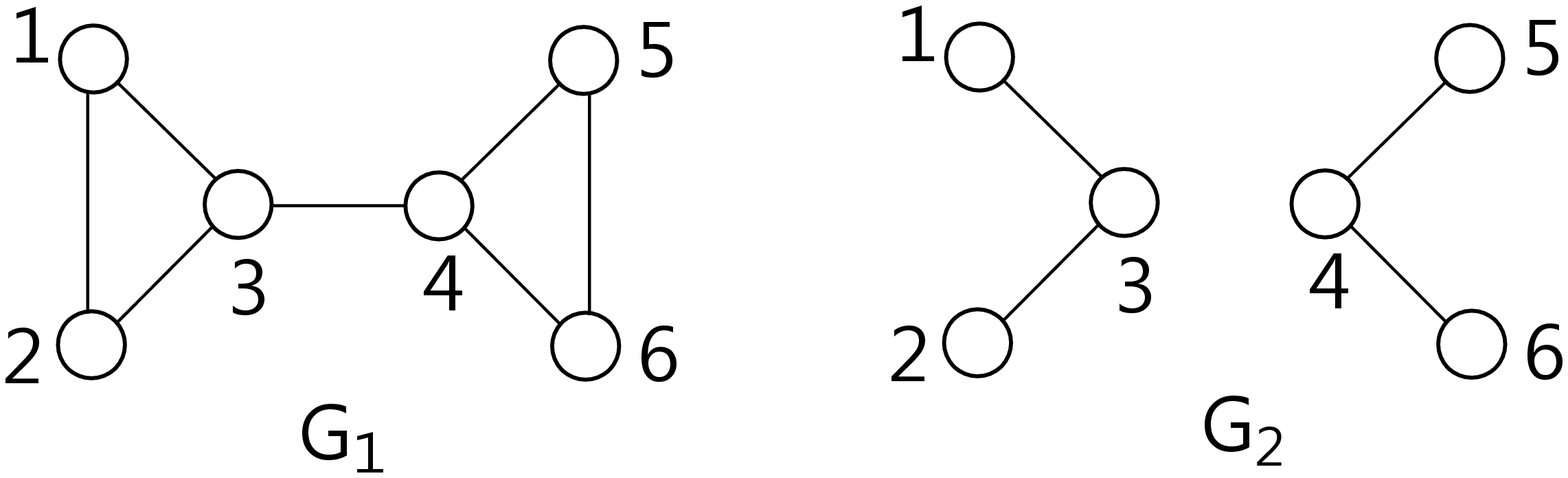}
\caption{An example showing that our assumption is necessary}\label{counter2}
\end{figure}
\begin{align*}
\left(\frac{1}{2}\rho(\{1,2\})+\frac{3}{2}\rho(\{5,6\})\right) + 
\left(\frac{1}{2}\rho(\{1,3\})+\frac{1}{2}\rho(\{2,3\}) + \frac{1}{2}\rho(\{4,5\})+\frac{1}{2}\rho(\{4,6\})\right) \\
=(1,1,1,1,2,2) \in 2\Pc \cap \ZZ^6 = 2 \Pc \cap L(2\Pc), 
\end{align*}
where $\Pc = \Pc_{G_1}+\Pc_{G_2}$. 
Since $(1,1,1,1,2,2)$ cannot be written as any sum of two integer points in $\Pc \cap \ZZ^6$, 
$\Pc$ is not normal. In particular, $\Pc$ does not have IDP. 

(b) Next, the following example shows that 
the assumption ``$G_2$ is a subgraph of $G_1$'' is also necessary. 
Let $G_1$ and $G_2$ be graphs in Figure \ref{counter3}. 
Then each of $G_1$ and $G_2$ satisfies that two odd cycles always have a common vertex. We also see that 
\begin{figure}[htb!]
\centering
\includegraphics[scale=0.33]{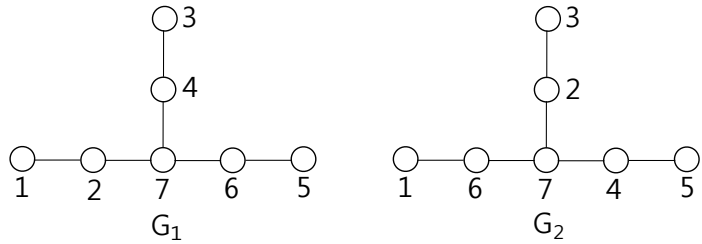}
\caption{Other example showing that our assumption is necessary}\label{counter3}
\end{figure}
\begin{align*}
\left(\frac{4}{3}\rho(\{1,2\})+\frac{1}{3}\rho(\{3,4\})+\frac{1}{3}\rho(\{5,6\})\right) + 
\left(\frac{2}{3}\rho(\{1,6\})+\frac{2}{3}\rho(\{2,3\})+\frac{2}{3}\rho(\{4,5\})\right) \\
=(2,2,1,1,1,1) \in 2\Pc \cap \ZZ^6 = 2\Pc \cap L(2\Pc), 
\end{align*}
where $\Pc=\Pc_{G_1}+\Pc_{G_2}$. 
Since $(2,2,1,1,1,1)$ cannot be written as a sum of any two integer points in $\Pc \cap \ZZ^6$, 
$\Pc$ is not normal, and thus, this does not have IDP. 

(c) In addition, Theorem \ref{subgraph2} is no longer true for the case of three graphs. 
Let $G_1$, $G_2$ and $G_3$ be graphs in Figure \ref{counter}. 
Then $G_2$ is a subgraph of $G_1$ and so is $G_3$ and $G_3$ is a subgraph of $G_2$, too. 
Note that $G_1$ satisfies that two odd cycles have a common vertex. We also have 
\begin{figure}[htb!]
\centering
\includegraphics[scale=0.32]{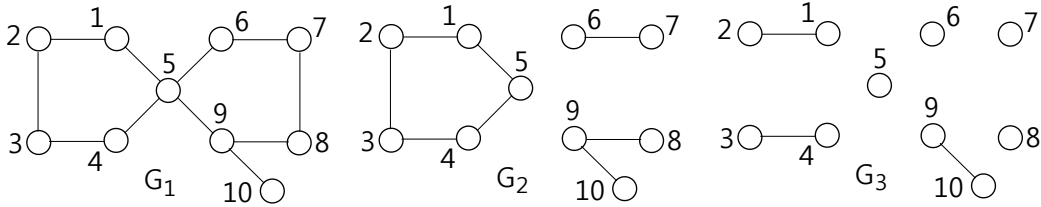}
\caption{A counterexample for Theorem \ref{subgraph2} in the case of three graphs}\label{counter}
\end{figure}
\begin{align*}
&\quad\left(\frac{3}{5}\rho(\{5,6\})+\frac{3}{5}\rho(\{7,8\}) + \frac{3}{5}\rho(\{5,9\})+\frac{1}{5}\rho(\{9,10\})\right) \\
&+\left(\frac{2}{5}\rho(\{1,5\})+\frac{2}{5}\rho(\{2,3\}) + \frac{2}{5}\rho(\{4,5\})+\frac{2}{5}\rho(\{6,7\})+\frac{2}{5}\rho(\{8,9\})\right) \\
&+\left(\frac{3}{5}\rho(\{1,2\})+\frac{3}{5}\rho(\{3,4\}) + \frac{4}{5}\rho(\{9,10\})\right) \\
&=(1,1,1,1,2,1,1,1,2,1) \in 2\Pc \cap \ZZ^{10}= 2\Pc \cap L(2\Pc), 
\end{align*}
where $\Pc=\Pc_{G_1}+\Pc_{G_2}+\Pc_{G_3}$. 
One can check that $(1,1,1,1,2,1,1,1,2,1)$ cannot be written as any sum of two integer points in $\Pc \cap \ZZ^{10}$. 
Thus this is not normal. In particular, this does not have IDP. 
\end{Examples}

\smallskip


\begin{thebibliography}{99}


\bibitem{BrGu}
W. Bruns and J. Gubeladze, 
``Polytopes, rings and K-theory'', 
Springer--Verlag, Heidelberg, 2009. 

\bibitem{BGT}
W. Bruns, J. Gubeladze and N. V. Trung, 
Normal polytopes, triangulations, and Koszul algebras, 
{\em J. Reine Angew. Math.} {\bf 485} (1997), 123--160. 

\bibitem{CLS}
D. Cox, J. Little and H. Schenck, 
``Toric varieties,'' American Mathematical Society, 2011. 

\bibitem{workshop}
C. Haase, T. Hibi and D. Maclagan (organizers), 
Mini-Workshop: Projective Normality ofSmooth Toric Varieties, 
OWR {\bf 4} (2007), 2283--2320. 


\bibitem{OH}
H. Ohsugi and T. Hibi, 
Normal polytopes arising from finite graphs, {\em J. Algebra} {\bf 207} (1998), 409--426. 


\bibitem{Sch}
A. Schrijver, ``Theory of Linear and Integer Programming,'' John Wiley \& Sons, 1986. 

\bibitem{SVV}
A. Simis, W. V. Vasconcelos and R. H. Villarreal, 
The integral closure of subrings associated to graphs, {\em J. Algebra} {\bf 199} (1998), 281--289. 

\end{thebibliography}
\end{document}